\newcommand{\Z} {\ensuremath{\mathbf{Z}}}
\newcommand{\Q} {\ensuremath{\mathbf{Q}}}
\newcommand{\R} {\ensuremath{\mathrm{R}}}
\newcommand{\G} {\ensuremath{\mathbf{G}}}
\newcommand{\Ccal} {\ensuremath{\mathscr{C}}}
\newcommand{\Ocal} {\ensuremath{\mathscr{O}}}
\newcommand{\Lcal} {\ensuremath{\mathscr{L}}}
\newcommand{\F} {\ensuremath{\mathbf{F}}}
\renewcommand{\H} {\ensuremath{\mathrm{H}}}
\newcommand{\PPic} {\ensuremath{\mathbf{Pic}}}
\renewcommand{\epsilon}{\varepsilon}
\renewcommand{\theta}{\vartheta}
\renewcommand{\phi}{\varphi}
\DeclareTextFontCommand\textmathbf{\bfseries\boldmath}
\newcommand*\defn[1]{\emph{#1}}
\providecommand{\pfister}[1]{\langle\kern-0.2em\langle#1\rangle\kern-0.2em\rangle}
\providecommand{\iso}{\ensuremath{\cong}}
\providecommand{\isoto}{\xrightarrow{~\sim~}}
\DeclareMathOperator{\im}{im}
\DeclareMathOperator{\Tr}{Tr}
\DeclareMathOperator{\Hom}{Hom}
\DeclareMathOperator{\Mor}{Mor}
\DeclareMathOperator{\colim}{\varinjlim}
\DeclareMathOperator{\Char}{char}
\DeclareMathOperator{\Br}{Br}
\DeclareMathOperator{\Spec}{Spec}
\DeclareMathOperator{\pr}{pr}
\DeclareMathOperator{\NS}{NS}
\DeclareMathOperator{\Pic}{Pic}
\DeclareMathOperator{\Cl}{Cl}
\DeclareMathOperator{\Quot}{Quot}
\DeclareMathOperator{\Divv}{Div}
\providecommand{\Acal}{\ensuremath{\mathscr{A}}}
\providecommand{\Fcal}{\ensuremath{\mathscr{F}}}
\providecommand{\Lcal}{\ensuremath{\mathscr{L}}}
\providecommand{\Mcal}{\ensuremath{\mathscr{M}}}
\providecommand{\Ocal}{\ensuremath{\mathscr{O}}}
\providecommand{\Ucal}{\ensuremath{\mathscr{U}}}
\providecommand{\HHom}{\ensuremath{\mathscr{H}\mathrm{om}}}
\providecommand{\Het}{\ensuremath{\H_\mathrm{\acute{e}t}}}
\newcommand{\tors} {\ensuremath{\mathrm{tors}}}
\newcommand{\Tors} {\ensuremath{\mathrm{n\text{-}tors}}}
\newcommand{\Div} {\ensuremath{\mathrm{n\text{-}div}}}
\providecommand{\divv}{\ensuremath{\mathrm{div}}}
\providecommand{\Zar}{\ensuremath{\mathrm{Zar}}}
\providecommand{\et}{\ensuremath{\mathrm{\acute{e}t}}}
\providecommand{\fppf}{\ensuremath{\mathrm{fppf}}}
\DeclareSymbolFont{cyrletters}{OT2}{wncyr}{m}{n}
\DeclareMathSymbol{\Sha}{\mathalpha}{cyrletters}{"58}
\newtheorem*{maintheorem}{Theorem}
\newtheorem{thm}{Theorem}[section]
\newtheorem{theorem}[thm]{Theorem}
\newtheorem{lemma}[thm]{Lemma}
\newtheorem{corollary}[thm]{Corollary}
\theoremstyle{definition}
\newtheorem{remark}[thm]{Remark}
\newtheorem{definition}[thm]{Definition}
\crefname{theorem}{Theorem}{Theorems}
\crefname{lemma}{Lemma}{Lemmata}
\crefname{corollary}{Corollary}{Corollaries}
\crefname{proposition}{Proposition}{Propositions}
\crefname{definition}{Definition}{Definitions}
\crefname{conjecture}{Conjecture}{Conjectures}
\crefname{example}{Example}{Examples}
\crefname{algorithm}{Algorithm}{Algorithms}
\crefname{remark}{Remark}{Remarks}
\numberwithin{equation}{section}
\begin{document}

\title[On the $p$-torsion of the Tate-Shafarevich group in characteristic $p > 0$]{On the $p$-torsion of the Tate-Shafarevich group\\ of abelian varieties\\ over higher dimensional bases over finite fields}

%first author

\author{Timo Keller}
\address{Timo Keller\\
Universität Bayreuth\\
Lehrstuhl Mathematik II (Computeralgebra)\\
Universitätsstraße 30\\
95440 Bayreuth\\
Germany}
\email{Timo.Keller@uni-bayreuth.de}
\urladdr{\url{https://www.timo-keller.de}}

\subjclass[2020]{11G10, 14F20, 14K15}

\keywords{Tate-Shafarevich groups of abelian varieties over higher dimensional bases over finite fields, $p$-torsion in characteristic $p > 0$; Abelian varieties of dimension $> 1$; Étale and other Grothendieck topologies and cohomologies; Arithmetic ground fields for abelian varieties}

\thanks{I thank Jean-Louis Colliot-Thélène, Aise Johan de Jong, Ulrich Görtz, Uwe Jannsen and Moritz Kerz, and Ariyan Javanpeykar, darx and David Loeffler from mathoverflow. I also thank the anonymous referee for improving the exposition. T.\@ K.\@ is supported by the Deutsche Forschungsgemeinschaft (DFG), Projektnummer STO 299/18-1, AOBJ: 667349 while working on this article.}

\maketitle

\begin{abstract}
We prove a finiteness theorem for the first flat cohomology group of finite flat group schemes over integral normal proper varieties over finite fields. As a consequence, we can prove the invariance of the finiteness of the Tate-Shafarevich group of Abelian schemes over higher dimensional bases under isogenies and alterations over/of such bases for the $p$-part. Along the way, we generalize previous results on the Tate-Shafarevich group in this situation.
\end{abstract}

\bigskip
\section{Introduction}
The Tate-Shafarevich group $\Sha(\Acal/X)$ of an Abelian scheme $\Acal$ over a base scheme $X$ is of great importance for the arithmetic of $\Acal$.  It classifies everywhere locally trivial $\Acal$-torsors. Its finiteness is sufficient to establish our analogue of the conjecture of Birch and Swinnerton-Dyer~\cite{KellerGoodReduction} over higher dimensional bases over finite fields.

In~\cite[section~4.3]{KellerSha}, we showed that finiteness of an $\ell$-primary component of the Tate-Shafarevich group descends under generically étale alterations of generical degree prime to $\ell$ for $\ell$ invertible on the base scheme.  This is used in~\cite[Corollary~5.11]{KellerGoodReduction} to prove the finiteness of the Tate-Shafarevich group and an analogue of the Birch-Swinnerton-Dyer conjecture for certain Abelian schemes over higher dimensional bases over finite fields under mild conditions.  In~\cite[section~4.4]{KellerSha}, we showed that finiteness of an $\ell$-primary component of the Tate-Shafarevich group is invariant under étale isogenies. In this article, we prove these results also for the $p^\infty$-torsion.

Recall that we defined the Tate-Shafarevich group of $\Acal/X$ for $\dim{X} > 1$ and $\Acal$ of good reduction as $\Het^1(X,\Acal)$. In~\cite[Lemma~4.15]{KellerSha} we proved as a hypothesis in~\cite[Theorem~4.5]{KellerSha}:
\begin{lemma}
	Let $X/k$ be a smooth variety and $\Ccal/X$ a smooth proper relative curve.  Assume $\dim{X} \leq 2$.  Let $Z \hookrightarrow X$ be a reduced closed subscheme of codimension $\geq 2$.  Then
	\[
	\H^i_Z(X, \PPic^0_{\Ccal/X}) = 0\quad\text{for $i \leq 2$}.
	\]
	If $\dim{X} > 2$, this holds at least up to $p$-torsion.
\end{lemma}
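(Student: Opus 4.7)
Set $\Acal := \PPic^0_{\Ccal/X}$, which is a smooth proper abelian scheme over $X$ since $\Ccal/X$ is a smooth proper relative curve, and let $j : U := X \setminus Z \hookrightarrow X$ be the complementary open immersion. The plan is to prove vanishing of the local cohomology sheaves $\Hcal^q_Z(\Acal) = 0$ for $q \leq 2$, whence $\H^i_Z(X, \Acal) = 0$ for $i \leq 2$ follows via the spectral sequence $\H^p(Z, \Hcal^q_Z(\Acal)) \Rightarrow \H^{p+q}_Z(X, \Acal)$. Via the distinguished triangle $R\Gamma_Z(\Acal) \to \Acal \to Rj_*(\Acal|_U)$, the task reduces to (a) $\Acal \isoto j_*(\Acal|_U)$ (Weil extension of sections) and (b) $R^1 j_*(\Acal|_U) = 0$ (purity for torsors). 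Assertion (a) is Weil's extension theorem applied to $\Acal/X$: since $X$ is regular, $\codim_X Z \geq 2$, and $\Acal$ is proper with abelian-variety fibres (hence without rational curves), any section over $V \cap U$ for $V \to X$ \'etale extends uniquely across $V \cap Z$.

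For (b), the sheaf $R^1 j_*(\Acal|_U)$ is supported on $Z$ with stalks $\H^1_{\et}(U^{\mathrm{sh}}_{\bar z}, \Acal)$ at each geometric point $\bar z$ (where $U^{\mathrm{sh}}_{\bar z}$ denotes the punctured strict-Henselian neighbourhood). I would first show these stalks are torsion: restriction to the generic point $\eta$ of $U^{\mathrm{sh}}_{\bar z}$ has kernel consisting of torsors $T$ trivialised over $\kappa(\eta)$, and the generic section of such a $T$ extends first across codimension-$1$ points by the valuative criterion applied to the proper morphism $T \to U^{\mathrm{sh}}_{\bar z}$, then across the remaining higher-codimension closed points by another application of Weil extension to $T$ (still smooth proper with abelian-variety fibres); thus the kernel vanishes and $\H^1_{\et}(U^{\mathrm{sh}}_{\bar z}, \Acal)$ injects into the torsion group $\H^1(\kappa(\eta), \Acal_\eta)$. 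Next, Gabber--Fujiwara absolute cohomological purity applied to the locally constant constructible \'etale sheaf $\Acal[n]$ for $n$ invertible on $X$ yields $\Hcal^q_Z(\Acal[n]) = 0$ for $q < 2 \codim_X Z$, so the Kummer long exact sequence coming from $0 \to \Acal[n] \to \Acal \xrightarrow{n} \Acal \to 0$ forces $R^1 j_*(\Acal|_U)$ to be uniquely $n$-divisible for every such $n$; together with the torsion-ness of its stalks this kills the prime-to-$p$ part. This establishes the ``up to $p$-torsion'' assertion in all dimensions.

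For the remaining $p$-part under the hypothesis $\dim X \leq 2$: the case $\dim X = 1$ is trivial since then $Z = \emptyset$, so one reduces to $\dim X = 2$ and a local computation over the $2$-dimensional strictly Henselian regular local ring $\Ocal^{\mathrm{sh}}_{X,z}$. Here the fppf Kummer sequence $0 \to \mu_{p^n} \to \G_m \xrightarrow{p^n} \G_m \to 0$ on $\Ccal$ combined with Tsen's vanishing $R^{\geq 2} f_* \G_m = 0$ for the structure morphism $f : \Ccal \to X$ yields an identification $\Acal[p^n] = R^1 f_* \mu_{p^n}$ in the fppf topology, reducing the $p$-primary analysis of $\H^1_{\et}(U^{\mathrm{sh}}_z, \Acal)$ (equivalently, the $p$-part of $R^1 j_*(\Acal|_U)$) to flat cohomology of $\mu_{p^n}$ on $\Ccal_{U^{\mathrm{sh}}_z}$, which is controlled by the paper's main finiteness theorem for the first flat cohomology group $\H^1_{\mathrm{fl}}$ of finite flat group schemes. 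The $p$-part handling is the main obstacle: the absence of an analogue of Gabber's absolute purity for $\mu_{p^n}$ in characteristic $p$ at codimension $\geq 3$ is precisely what forces the weakening to ``up to $p$-torsion'' when $\dim X > 2$.
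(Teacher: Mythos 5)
Your prime-to-$p$ argument is essentially sound and close in spirit to what this paper does (note that the paper does not reprove the quoted lemma here --- it cites [KellerSha, Lemma~4.15] --- but its Theorem~\ref{thm:HiZXG=0} and Corollary~\ref{vanishing-hypothesis} establish exactly the ``up to $p$-torsion'' part, for arbitrary abelian schemes): you extend sections and trivializations across $Z$ by Weil/N\'eron-type extension, and kill the prime-to-$p$ part of the remaining obstruction via the Kummer sequence, using Gabber absolute purity for $\Acal[n]$ where the paper instead uses Zariski--Nagata purity for the torsor together with $\Pic(X)\isoto\Pic(U)$ and $\Br(X)\hookrightarrow\Br(U)$ to handle $\H^2(-,\mu_n)$. (Two small points to tighten: Gabber purity is stated for regular pairs, so you need the standard Noetherian-induction/stratification step to get $\Hcal^q_Z(\Acal[n])=0$ for $q<2\codim_XZ$ with $Z$ merely reduced; and the ``Weil extension applied to $T$'' step needs a statement valid for torsors, not just group schemes --- e.g.\ that rational maps from a regular scheme to a proper scheme with no rational curves in fibres extend, or a descent to the group-scheme case.)

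The genuine gap is the $p$-part under $\dim X\leq 2$, which is precisely the content for which the hypothesis $\Acal=\PPic^0_{\Ccal/X}$ is needed. Your reduction to the stalk $\H^1_{\et}(U^{\mathrm{sh}}_z,\Acal)$ over the punctured spectrum of the $2$-dimensional strictly Henselian regular local ring is fine, and the identification $\Acal[p^n]=R^1f_*\mu_{p^n}$ is the right first move, but the appeal to the paper's finiteness theorem (\cref{thm:H1fppfofFiniteFlatGroupSchemeOverFiniteFieldIsFinite}) cannot close it: that theorem concerns \emph{proper} varieties over a \emph{finite} field, whereas $U^{\mathrm{sh}}_z$ is neither (it lives over a separably closed residue field and is not proper), and in any case it yields finiteness, while the lemma asserts \emph{vanishing} of the $p$-primary part. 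What the Jacobian hypothesis actually buys is the ability to avoid $\mu_{p^n}$ altogether: via the Leray spectral sequence for $f:\Ccal\to X$ one expresses the local cohomology of $\PPic^0_{\Ccal/X}$ in terms of cohomology (with supports) of $\G_m$ on the regular schemes $\Ccal$ and $X$, and then uses integral statements valid at $p$ --- triviality of $\Pic$ of the punctured spectrum of a $2$-dimensional regular local ring (factoriality) and injectivity/vanishing results for Brauer groups of such local schemes --- to get the full vanishing in low degrees. Your proposal never exploits this, so the $p$-part in dimension $\leq 2$ remains unproved as written.
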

We weaken the hypothesis that $\Acal$ is a Jacobian in~\cite[Lemma~4.15]{KellerSha} to arbitrary Abelian schemes:
\begin{maintheorem} (See~\cref{vanishing-hypothesis})
	Let $X$ be a regular integral Noetherian separated scheme and $\Acal/X$ be an Abelian scheme.  Let $Z \hookrightarrow X$ be a closed subscheme of codimension $\geq 2$.  Then the vanishing condition~\cite[(4.4)]{KellerSha} holds for $\Acal/X$: $\H^i_Z(X,\Acal)$ is torsion for all $i$. Furthermore, $\H^0_Z(X,\Acal) = 0$, and for $i = 1,2$, the only possible torsion is $p$-torsion for $p$ not invertible on $X$.
\end{maintheorem}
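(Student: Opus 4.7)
The statement comprises three claims: $\H^0_Z(X,\Acal)=0$; $\H^i_Z(X,\Acal)$ is $p$-primary torsion for $i=1,2$; and $\H^i_Z(X,\Acal)$ is torsion for every $i$. I would prove them in that order.

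The degree-zero claim is a separatedness argument. Since $\Acal\to X$ is separated, the zero section is a closed immersion, and because $X$ is integral and $\codim Z\geq 1$ the open $X\setminus Z$ is schematically dense in $X$. Hence the restriction map $\Acal(X)\to\Acal(X\setminus Z)$ is injective, and its kernel $\H^0_Z(X,\Acal)$ vanishes.

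For the $p$-primary claim, I would apply the étale Kummer short exact sequence
\[
0 \to \Acal[\ell^n] \to \Acal \xrightarrow{\ell^n} \Acal \to 0
\]
for each prime $\ell$ invertible on $X$ and every $n\geq 1$. The sheaf $\Acal[\ell^n]$ is locally constant constructible, so Gabber's absolute cohomological purity on the regular scheme $X$ yields $\H^i_Z(X,\Acal[\ell^n])=0$ for $i<2\codim_X Z$, hence through $i\leq 3$ under our hypothesis. The associated long exact sequence then forces $\ell^n$ to act bijectively on $\H^i_Z(X,\Acal)$ for $i=1,2$, so these groups are $\ell$-torsion-free (and $\ell$-divisible) for every invertible $\ell$, leaving only $p$-primary torsion.

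For the torsion-in-every-degree claim, I would reduce via the local-to-global spectral sequence
\[
\H^p(Z,\mathcal{H}^q_Z(\Acal)) \Rightarrow \H^{p+q}_Z(X,\Acal)
\]
to showing that each stalk $\H^q_{\{z\}}(\Spec\Ocal^{\mathrm{sh}}_{X,z},\Acal)$ is torsion at every $z\in Z$. Over a strictly Henselian regular local ring $R=\Ocal^{\mathrm{sh}}_{X,z}$ of dimension $c\geq 2$, smoothness of $\Acal$ gives $\Het^i(\Spec R,\Acal)=\Hfl^i(\Spec R,\Acal)=0$ for $i\geq 1$ (Grothendieck's comparison plus triviality of the strictly Henselian local site), and Weil's extension theorem for abelian schemes over a regular base in codimension $\geq 2$ gives $\Acal(R)=\Acal(\Spec R\setminus\{\mathfrak{m}\})$. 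The local-cohomology long exact sequence then kills the degree-$1$ stalk and identifies each higher-degree stalk with the punctured-spectrum group $\H^{q-1}(\Spec R\setminus\{\mathfrak{m}\},\Acal)$. The main obstacle I expect is controlling these punctured-spectrum groups: for the $\ell$-primary part with $\ell$ invertible the Kummer/purity argument above transplants to the punctured spectrum and, combined with the bounded étale cohomological dimension there, yields torsion; for the $p$-primary part, absolute purity is unavailable, and I would instead exploit the flat Kummer sequence $0\to\Acal[p^n]\to\Acal\xrightarrow{p^n}\Acal\to 0$ together with the fact that $\Hfl^i_Z(X,\Acal[p^n])$ is $p^n$-torsion (as $\Acal[p^n]$ is finite flat of order $p^{2gn}$), using properties of finite flat group schemes on regular bases to control the $p$-primary contributions and deduce overall torsion.
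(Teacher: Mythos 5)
Your treatment of $\H^0_Z$ and of degrees $1,2$ is essentially sound, but note that it takes a different route from the paper in the latter case: where you invoke Gabber's absolute cohomological purity for the locally constant constructible sheaf $\Acal[\ell^n]$ ($\ell$ invertible on $X$), the paper proves an elementary substitute (its Theorem~\ref{thm:HiZXG=0}: $\H^i_Z(X,G)=0$ for $i\le 2$ for finite étale $G$ of invertible order) using Weil's extension theorem for rational maps into smooth separated group schemes, Zariski--Nagata purity to extend torsors, and a $\Pic$/$\Br$ computation for $\mu_n$; both give the vanishing of $\H^i_Z(X,\Acal[\ell^n])$ in the range needed, yours even up to degree $3$, at the cost of a much deeper input and of one extra step you elide: purity is stated for \emph{regular} closed subschemes, so for a general closed $Z$ of codimension $\ge 2$ you need the standard semipurity/stratification (Noetherian induction) argument to conclude $\H^i_Z(X,\Acal[\ell^n])=0$ for $i\le 3$.

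The genuine gap is in the claim that $\H^i_Z(X,\Acal)$ is torsion for \emph{all} $i$. First, your reduction is imprecise: the stalk of $\mathcal{H}^q_Z(\Acal)$ at a geometric point $\bar z$ is the cohomology of $\Spec\Ocal^{\mathrm{sh}}_{X,\bar z}$ with supports in the full preimage of $Z$, not in the closed point, so the passage to $\H^q_{\{\mathfrak m\}}$ requires a further induction on $\dim Z$ that you do not set up. More seriously, the tools you propose cannot prove torsionness at all: the flat Kummer sequence together with the fact that $\H^*_Z(X,\Acal[p^n])$ is killed by $p^n$ (or, for $\ell$ invertible, purity) only controls the kernel and cokernel of multiplication by $p^n$ (resp.\ $\ell^n$) on $\H^i_Z(X,\Acal)$; a group can have injective multiplication by $p^n$ with finite cokernel and still be torsion-free, so no Kummer-theoretic dévissage by itself yields that the group is torsion. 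The paper avoids this entirely: it cites the external input that $\H^i(X,\Acal)$ and $\H^i(U,\Acal)$ are torsion for $i>0$ (\cite[Proposition~4.1]{KellerSha}), and combines it with the surjectivity of $\Acal(X)\to\Acal(U)$ (Weil extension, codimension $\ge 2$) and the long exact sequence of the pair $(X,U=X\setminus Z)$ to get torsionness of $\H^i_Z(X,\Acal)$ in every degree. Some input of this kind --- torsionness of higher étale cohomology of abelian schemes, e.g.\ via comparison with the generic fibre where Galois cohomology is torsion, or Raynaud's theorem for $\H^1$ --- is unavoidable, and your sketch does not supply it; as written, both the $\ell$-primary and the $p$-primary halves of your Step on ``torsion in every degree'' stop short of the actual conclusion.
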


Our main results are now as follows:

\begin{maintheorem} (See~\cref{thm:H1fppfofFiniteFlatGroupSchemeOverFiniteFieldIsFinite})
	Let $X$ be a proper integral normal variety over a finite field and $G/X$ be a finite flat commutative group scheme.  Then $\H^1_\fppf(X,G)$ is finite.
\end{maintheorem}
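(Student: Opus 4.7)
The plan is to perform devissage to reduce to $G$ killed by a single prime $\ell$, and then to handle the cases $\ell \neq p$ and $\ell = p$ separately, where $p = \Char \F_q$ is the characteristic of the base finite field. For any short exact sequence $0 \to G' \to G \to G'' \to 0$ of finite flat commutative group schemes over $X$, the long exact sequence in fppf cohomology together with the finiteness of $\H^0_\fppf(X, G'') = G''(X)$ (which holds since $X$ is proper and $G''$ is finite over $X$, so $G''(X)$ injects into the generic fibre $G''(k(X))$, a finite set) reduces finiteness of $\H^1_\fppf(X, G)$ to that of $\H^1_\fppf(X, G')$ and $\H^1_\fppf(X, G'')$. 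Iterating with the primary decomposition $G \cong \bigoplus_\ell G[\ell^\infty]$ and the $\ell$-adic filtration, one reduces to $G$ killed by a single prime $\ell$. If $\ell \neq p$, then $G$ is étale on $X$, so $\H^1_\fppf(X, G) = \H^1_\et(X, G)$, which is finite by the standard finiteness theorem for constructible étale cohomology on proper varieties over finite fields.

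The substantial case is $\ell = p$. I would first reduce to the smooth projective situation via de Jong's alteration theorem, which provides a proper generically finite surjection $f \colon \tilde X \to X$ with $\tilde X$ smooth projective over a finite extension of $\F_q$; the kernel of the pullback $\H^1_\fppf(X, G) \to \H^1_\fppf(\tilde X, f^* G)$ is controlled via the Leray spectral sequence and finiteness of $\H^0$-terms of $f_* G$. On the smooth projective $\tilde X$, the connected-étale sequence reduces further to an infinitesimal $G^0$ of $p$-power order. I would then filter $G^0$ so that its simple subquotients are (twisted forms of) $\mu_p$ or $\alpha_p$, and use the embeddings into smooth group schemes given by the Kummer sequence $0 \to \mu_p \to \G_m \xrightarrow{p} \G_m \to 0$ (exact in fppf even in characteristic $p$) and the Frobenius sequence $0 \to \alpha_p \to \G_a \xrightarrow{F} \G_a \to 0$. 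Finiteness of $\H^1_\fppf(\tilde X, \mu_p)$ and $\H^1_\fppf(\tilde X, \alpha_p)$ then reduces to finiteness of $\Pic(\tilde X)[p]$ (since $\Pic^0(\tilde X)$ is an abelian variety with finite $p$-torsion and $\NS(\tilde X)$ is finitely generated) and of $\H^i(\tilde X, \Ocal_{\tilde X})$ for $i \leq 1$ (finite-dimensional $\F_q$-vector spaces, hence finite as sets).

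The main obstacle is the $p$-primary part in characteristic $p$: flat cohomology does not admit the clean general finiteness theorems available for étale cohomology, forcing one to argue via explicit filtrations and embeddings into smooth group schemes. A secondary difficulty is descending the finiteness statement from the alteration $\tilde X$ back to the normal but possibly singular $X$; an alternative is to work directly on $X$, exploiting that its singular locus has codimension $\geq 2$ and invoking local cohomology vanishing results in the spirit of the first main theorem of the present paper.
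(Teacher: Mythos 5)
Your overall strategy has the same flavor as the paper's (dévissage to prime order, de Jong's alterations, Kummer/Frobenius-type sequences plus coherence over a finite field, finiteness of $\Pic[p]$), but two steps you treat as routine are genuine gaps. First, the dévissage as you set it up does not exist over the base: over a non-local base the connected--étale sequence is not available, and kernels such as $G[\ell^i]$ (or a filtration of $G^0$ with simple subquotients) need not be \emph{flat} subgroup schemes --- kernels of homomorphisms of finite flat group schemes can degenerate, e.g.\ multiplication by $t$ on $\alpha_p$ over $\A^1$ in characteristic $p$ has non-flat kernel. The paper produces the filtration only over (a finite extension of) the function field and then spreads it out by taking schematic closures and flattening them with a Raynaud--Gruson blow-up (\cref{lemma:SchemeTheoreticClosureOfFiniteFlatGroupScheme}, \cref{lemma:FiltrationBypOrderGroupSchemes}); the price is that one must prove that the kernel of pullback along an alteration is finite (\cref{lemma:KernelFinite}). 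Your proposed control of that kernel ``via the Leray spectral sequence and finiteness of $\H^0$-terms of $f_*G$'' is the second gap: the relevant group is $\H^0$ of the cokernel of $G \to f_*f^*G$, a sheaf that is not finite flat, and its sections need not lift to $G(\tilde X)$; nor can one pass to generic fibres, since for instance $\ker\big(\H^1_\fppf(K,\alpha_p) \to \H^1_\fppf(K^{1/p},\alpha_p)\big) = K/K^p$ is infinite. The paper's argument first flattens $f$ by a blow-up (harmless by \cref{lemma:TorsorOverGenericPointTrivial} and normality) and then runs a \v{C}ech cocycle argument for the resulting finite flat covering, the key input being that $G(Y \times_X Y)$ is \emph{finite} because everything is proper over a \emph{finite} field (\cref{lemma:FiniteNumberOfSectionsOfFiniteFlatScheme}); this finiteness of section sets is exactly what your sketch is missing.

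A further issue is your parenthetical ``(twisted forms of) $\mu_p$ or $\alpha_p$'': the Kummer and Frobenius sequences embed only the split forms into $\G_m$, resp.\ $\G_a$. Nontrivial forms do occur over the imperfect function field, and after a further alteration you can trivialize them only generically, hence only over a dense open $U \subseteq \tilde X$; you then need finiteness of $\H^1_\fppf(U,\mu_p)$ for a \emph{non-proper} $U$, which is why the paper proves \cref{cor:H1fppfmupFinite} for open subschemes of regular proper varieties (controlling $\G_m(U)/p$ and $\Pic(U)[p]$ via the divisor sequence relative to the compactification) and combines it with injectivity of restriction to dense opens on normal schemes (\cref{lemma:TorsorOverGenericPointTrivial}). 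For the subquotients with $V=0$ (forms of $\Z/p$ and $\alpha_p$) the paper instead invokes de Jong's Dieudonné-theoretic embedding $0 \to G \to \Lcal \to \Mcal \to 0$ into vector bundles, which treats all forms uniformly and needs only properness, not smoothness. Your coherent-cohomology and $\Pic[p]$ inputs are fine as far as they go, but without the spreading-out of filtrations, the finite-flat-covering cocycle argument for the kernel, and a treatment of nontrivial forms (or of opens), the proof as proposed does not close.
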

This theorem is proven by reduction to the finite flat simple group schemes $\Z/p$,$\mu_p$ and $\alpha_p$ over an algebraically closed field using de Jong's alteration theorem, Raynaud-Gruson and a dévissage argument.

Using this technical result and refining our methods from~\cite{KellerSha}, we obtain the following three results:

The following theorem has been proved as~\cite[Lemma~4.28]{KellerSha} for $p$ prime to the characteristic of $k$; in this article, we prove it also for $p$ equal to the characteristic of $k$:
\begin{maintheorem} (See~\cref{lemma:ShapinftyCofinitelyGenerated})
	Let $\Acal/X$ be an Abelian scheme over a proper variety $X$ over a finite field of characteristic $p$.  Then $\Sha(\Acal/X)[p^\infty]$ is cofinitely generated.
\end{maintheorem}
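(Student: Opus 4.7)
The plan is to deduce cofinite generation from the preceding finiteness theorem for $\H^1_\fppf$ of finite flat commutative group schemes, by means of the Kummer sequence for the $p$-torsion of $\Acal$.

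First, since $\Sha(\Acal/X)[p^\infty]$ is a $p$-primary torsion abelian group, it is cofinitely generated if and only if its $p$-torsion subgroup $\Sha(\Acal/X)[p]$ is finite. It therefore suffices to prove the latter.

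The Kummer sequence $0 \to \Acal[p] \to \Acal \xrightarrow{p} \Acal \to 0$ in the fppf topology, together with the identification $\H^i_\fppf(X,\Acal) = \H^i_\et(X,\Acal)$ for the smooth group scheme $\Acal$ (so that the right-hand term below is indeed $\Sha(\Acal/X)[p]$), yields a short exact sequence
\[
0 \to \Acal(X)/p\Acal(X) \to \H^1_\fppf(X, \Acal[p]) \to \Sha(\Acal/X)[p] \to 0.
\]
Now $\Acal[p]/X$ is a finite flat commutative $X$-group scheme of order $p^{2\dim\Acal}$. After possibly replacing $X$ by the normalization of an irreducible component of its reduction---a finite, hence proper, morphism---I may assume that $X$ is an integral normal proper variety over the finite field. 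The second main theorem of the introduction then applies with $G = \Acal[p]$, yielding that $\H^1_\fppf(X,\Acal[p])$ is finite. Its quotient $\Sha(\Acal/X)[p]$ is therefore finite as well, completing the proof.

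The main subtlety is the reduction to the integral normal case: one must verify that finiteness of $\Sha(\Acal/X)[p]$ for $\Acal$ over the normalization implies finiteness downstairs on $X$. Since the normalization is finite and birational, the comparison map differs only by cohomology supported on a proper closed subscheme, and this contribution is controlled by the vanishing statement of the first main theorem above (applied to the $p$-torsion, where the bound is the relevant $p$-primary part). Once this reduction is in place, the argument is a formal consequence of the Kummer sequence and the preceding finiteness theorem.
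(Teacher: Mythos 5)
Your core argument coincides with the paper's proof of \cref{lemma:ShapinftyCofinitelyGenerated}: the Kummer sequence of \cref{lemma:pnKummer} gives a surjection $\H^1_\fppf(X,\Acal[p]) \twoheadrightarrow \H^1_\fppf(X,\Acal)[p]$, the identification $\H^1_\fppf(X,\Acal)=\H^1_\et(X,\Acal)=\Sha(\Acal/X)$ comes from \cref{lemma:VergleichHifppfHiSYNHietSmoothGroupScheme}, finiteness of $\H^1_\fppf(X,\Acal[p])$ comes from \cref{thm:H1fppfofFiniteFlatGroupSchemeOverFiniteFieldIsFinite}, and cofinite generation of a $p$-primary group with finite $p$-torsion is the standard criterion the paper quotes. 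So in substance this is the same proof.

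The one place you go beyond the paper is the reduction to $X$ integral and normal, and that step is not justified as written. Your claim that the passage from $X$ to the normalization $X'$ of a component of $X_{\mathrm{red}}$ is ``controlled by the vanishing statement of the first main theorem'' is a non sequitur: that theorem (\cref{vanishing-hypothesis}) concerns $\H^i_Z(X,\Acal)$ for a closed subscheme $Z$ of codimension $\geq 2$ in a \emph{regular} $X$, whereas here $X$ need not be regular, the normalization is a finite birational morphism rather than the complement of a small closed subset, and what you actually need is finiteness of $\ker\bigl(\H^1_\fppf(X,\Acal[p]) \to \H^1_\fppf(X',\Acal[p])\bigr)$ (or of the corresponding kernel for $\Sha$). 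The paper's tool for such kernels is \cref{lemma:KernelFinite}, but it assumes the base of the alteration is already normal, so it cannot be used to descend from the normalization to a non-normal $X$; note that the paper itself does not perform this reduction and simply applies \cref{thm:H1fppfofFiniteFlatGroupSchemeOverFiniteFieldIsFinite}, i.e.\ implicitly works under its hypotheses (proper, integral, normal). So either state the result under those hypotheses, or supply an actual argument bounding the kernel of pullback along the normalization; the vague appeal you make does not do this.
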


In~\cite[Theorem~4.31]{KellerSha}, we proved:
\begin{theorem}
	Let $X/k$ be proper, $\Acal$ and $\Acal'$ Abelian schemes a variety $X$ over a finite field and $f: \Acal' \to \Acal$ an étale isogeny.  Let $\ell \neq \Char{k}$ be a prime.  Then $\Sha(\Acal/X)[\ell^\infty]$ is finite if and only if $\Sha(\Acal'/X)[\ell^\infty]$ is finite.
\end{theorem}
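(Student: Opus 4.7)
The plan is to exploit the étale short exact sequence $0 \to K \to \Acal' \xrightarrow{f} \Acal \to 0$ on $X$, where $K := \ker f$ is a finite étale commutative group scheme because $f$ is étale. Since only the $\ell^\infty$-torsion is relevant, I would first reduce to the case where $\deg f$ is a power of $\ell$ by using the canonical decomposition $K = K[\ell^\infty] \oplus K_{(\ell')}$ into its $\ell$-primary and prime-to-$\ell$ parts and factoring $f = f_\ell \circ f_{(\ell')}$ accordingly. The prime-to-$\ell$ factor $f_{(\ell')}$ admits a complementary isogeny $g$ with $g \circ f_{(\ell')} = [n]$ for $n = \deg f_{(\ell')}$ coprime to $\ell$, so $f_{(\ell')*}$ becomes an isomorphism after restriction to $\ell^\infty$-torsion of $\H^1$.

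So assume $K$ is of $\ell$-power order and take the induced long exact sequence in étale cohomology
\[
\H^1_\et(X,K) \to \Sha(\Acal'/X) \xrightarrow{f_*} \Sha(\Acal/X) \to \H^2_\et(X,K),
\]
recalling that $\Sha(\Acal/X) = \H^1_\et(X,\Acal)$ in this setup. Since $X$ is proper over the finite field $k$ and $K$ is a constructible $\ell$-power torsion étale sheaf (as $\ell$ is invertible on $X$), Deligne's finiteness theorem yields that $\H^i_\et(X,K)$ is finite for all $i$. Consequently both $\ker f_*$ and $\coker f_*$ are finite $\ell$-groups.

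To conclude, I restrict to $\ell^\infty$-torsion: clearly $\ker\bigl(f_*\vert_{\ell^\infty\text{-tors}}\bigr) \subseteq \ker f_*$ is finite, and since $\coker f_*$ is already $\ell$-power torsion, the cokernel of $f_* \colon \Sha(\Acal'/X)[\ell^\infty] \to \Sha(\Acal/X)[\ell^\infty]$ injects into $\coker f_*$ and is therefore finite as well. A homomorphism of abelian groups with finite kernel and cokernel transports the property of being finite in either direction, so $\Sha(\Acal/X)[\ell^\infty]$ is finite if and only if $\Sha(\Acal'/X)[\ell^\infty]$ is finite.

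The main obstacle is not really conceptual, as the argument is a standard dévissage; the only subtle point is ensuring that restricting to $\ell^\infty$-torsion preserves finiteness of the cokernel, which works precisely because $\coker f_*$ embeds into $\H^2_\et(X,K)$ and is thus already $\ell$-power torsion. Conceptually, this $\ell \neq \Char k$ case is easier than the $\ell = \Char k$ extension that is the main content of the present paper, because Deligne's finiteness of $\H^i_\et(X,K)$ for finite étale $K$ is classical, whereas for $p = \Char k$ one must invoke the new finiteness statement \cref{thm:H1fppfofFiniteFlatGroupSchemeOverFiniteFieldIsFinite} for finite flat group schemes in flat cohomology.
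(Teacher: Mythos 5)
Your argument is correct, but it is not the route taken here: the paper does not reprove this $\ell \neq \Char k$ statement at all (it is quoted from~\cite[Theorem~4.31]{KellerSha}), and the method it uses for the analogous isogeny-invariance statement (\cref{thm:IsogenyInvarianceOfFinitenessOfSha}) is structured differently. There, only the finiteness of $\H^1$ of the kernel is available (\cref{thm:H1fppfofFiniteFlatGroupSchemeOverFiniteFieldIsFinite} gives nothing about $\H^2_\fppf$), so the exact sequence $\H^1(X,\ker f) \to \H^1(X,\Acal') \to \H^1(X,\Acal)$ only yields one implication, and the converse is obtained by playing the dual isogeny $f^t$ against a polarization $\Acal^t \to \Acal$. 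You instead use the full stretch of the long exact sequence up to $\H^2_\et(X,K)$ and invoke the classical finiteness of $\H^i_\et(X,K)$ for a constructible sheaf with invertible torsion on a proper variety over a finite field, so both directions come out of a single sequence with no duality argument; the prior dévissage $K = K[\ell^\infty]\oplus K_{(\ell')}$ with the complementary isogeny $g\circ f_{(\ell')} = [n]$, $(n,\ell)=1$, correctly disposes of the prime-to-$\ell$ (in particular the possible étale $p$-) part of the kernel so that Deligne's theorem applies to what remains. Two small points you should make explicit: the factorization uses the existence of the quotient abelian scheme $\Acal'/K_{(\ell')}$ over the base $X$ (standard, but it deserves a citation, e.g.\ via Raynaud's theorem that an abelian algebraic space over a scheme is a scheme), and the injection of the cokernel of $f_*$ restricted to $\ell^\infty$-torsion into $\coker f_*$ uses that $\H^1_\et(X,\Acal')$ is torsion (so an $\ell$-primary class in the image of $f_*$ is the image of an $\ell$-primary class); this torsionness is~\cite[Proposition~4.1]{KellerSha} and is also used implicitly in your reduction step. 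With those references added, your proof is complete for $\ell \neq \Char k$; note, however, that it does not adapt to the case $\ell = \Char k$ treated in this paper, precisely because finiteness of $\H^2_\fppf(X,\ker f)$ is not known there, which is what forces the polarization/dual-isogeny detour in \cref{thm:IsogenyInvarianceOfFinitenessOfSha}.
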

In this article, we prove it also for $\ell$ equal to the characteristic of $k$:
\begin{maintheorem}[invariance of finiteness of $\Sha$ under isogenies, \cref{thm:IsogenyInvarianceOfFinitenessOfSha}]
	Let $X/k$ be a proper variety over a finite field $k$ and $f: \Acal \to \Acal'$ be an isogeny of Abelian schemes over $X$.  Let $p$ be an arbitrary prime.  Assume $f$ étale if $p \neq \Char{k}$.  Then $\Sha(\Acal/X)[p^\infty]$ is finite if and only if $\Sha(\Acal'/X)[p^\infty]$ is finite.
\end{maintheorem}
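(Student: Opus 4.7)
The plan is to run the same long-exact-sequence argument as in \cite[Theorem~4.31]{KellerSha} but in the fppf topology, with the finiteness of $\H^1_\fppf(X,G)$ for finite flat commutative group schemes $G/X$ (the second main theorem in this excerpt) as the key new input replacing finiteness of $\H^1_\et(X,G)$ in the prime-to-$p$ case.

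First I would set $G \mathrel{\mathop:}= \ker f$, a finite flat commutative group scheme over $X$ which need not be étale when $p = \Char k$. The fppf short exact sequence
\[
0 \to G \to \Acal \xrightarrow{\;f\;} \Acal' \to 0
\]
gives the exact piece
\[
\H^1_\fppf(X,G) \to \H^1_\fppf(X,\Acal) \xrightarrow{f_*} \H^1_\fppf(X,\Acal').
\]
Since $\Acal$ and $\Acal'$ are smooth over $X$, fppf and étale cohomology agree on them in degree $1$, so $\H^1_\fppf(X,\Acal) = \H^1_\et(X,\Acal) = \Sha(\Acal/X)$ and likewise for $\Acal'$. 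By the second main theorem of this article, $\H^1_\fppf(X,G)$ is finite; hence the kernel of $f_* \colon \Sha(\Acal/X) \to \Sha(\Acal'/X)$ is finite, and in particular the kernel of its restriction to $p^\infty$-torsion is finite. Consequently, finiteness of $\Sha(\Acal'/X)[p^\infty]$ implies finiteness of $\Sha(\Acal/X)[p^\infty]$.

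For the converse implication, I would choose a companion isogeny $g \colon \Acal' \to \Acal$ satisfying $f \circ g = [n]_{\Acal'}$ and $g \circ f = [n]_\Acal$ with $n = \deg f$. The kernel of $g$ is again a finite flat commutative group scheme over $X$, so exactly the same argument (using the second main theorem again for the kernel of $g$) shows that the kernel of
\[
g_* \colon \Sha(\Acal'/X)[p^\infty] \to \Sha(\Acal/X)[p^\infty]
\]
is finite; finiteness of $\Sha(\Acal/X)[p^\infty]$ therefore forces finiteness of $\Sha(\Acal'/X)[p^\infty]$.

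The main obstacle is entirely absorbed into the second main theorem (finiteness of $\H^1_\fppf(X,G)$), whose proof goes through de Jong alterations, Raynaud--Gruson, and dévissage to the simple group schemes $\Z/p$, $\mu_p$, $\alpha_p$. Once that is granted, the proof above is essentially formal; the only point to be careful about is the identification $\H^1_\fppf(X,\Acal) = \H^1_\et(X,\Acal)$, which is standard for smooth commutative group schemes.
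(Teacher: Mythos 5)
Your first direction coincides with the paper's argument: the fppf Kummer sequence for $0 \to \ker f \to \Acal \xrightarrow{f} \Acal' \to 0$, the comparison $\H^1_\fppf(X,\Acal)=\H^1_\et(X,\Acal)=\Sha(\Acal/X)$ for smooth group schemes, and the finiteness of $\H^1_\fppf(X,\ker f)$ from the finiteness theorem for finite flat group schemes. For the converse, however, you take a genuinely different route. The paper does not use a complementary isogeny; it passes to the dual abelian schemes, using a polarization $\lambda$ to transfer finiteness between $\Sha(\Acal/X)[p^\infty]$ and $\Sha(\Acal^t/X)[p^\infty]$ (and likewise for $\Acal'$), and then runs the same Kummer argument for the dual isogeny $f^t\colon \Acal'^t \to \Acal^t$, whose kernel is again finite flat. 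You instead invoke an isogeny $g\colon \Acal' \to \Acal$ with $g\circ f=[n]_\Acal$ and $f\circ g=[n]_{\Acal'}$ and apply the first argument to $g$. This is correct, but the existence of $g$ over the base $X$ (not just over a field) is the one point you should justify: it follows because $\ker f$ is a commutative finite flat group scheme of order $n=\deg f$, hence killed by $n$ (Deligne's theorem), so $\ker f \subseteq \Acal[n]$ and $[n]_\Acal$ factors through the fppf quotient $\Acal'\cong\Acal/\ker f$, yielding $g$ with $g\circ f=[n]_\Acal$; then $f\circ g=[n]_{\Acal'}$ since $f$ is an fppf epimorphism, and $g$ is fiberwise an isogeny, hence an isogeny with finite flat kernel. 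Comparing the two: your route is shorter and avoids dual abelian schemes and the existence of a polarization (the paper's input from \cite[Proposition~2.19]{KellerGoodReduction}), at the cost of invoking Deligne's theorem and the quotient formalism; the paper's route stays entirely within the isogeny--dual-isogeny formalism it already needs elsewhere. Both reduce to the same key finiteness theorem for $\H^1_\fppf(X,G)$, and both implicitly require $X$ to satisfy that theorem's hypotheses (integral and normal).
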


In~\cite[Theorem~4.29]{KellerSha}, we proved:
\begin{theorem}
	Let $f: X' \to X$ be a morphism of normal integral varieties over a finite field which is an alteration of degree prime to $\ell$ for a prime $\ell$ invertible on $X$, i.e., $f$ is a proper, surjective, generically étale morphism of generical degree prime to $\ell$.  If $\Acal$ is an Abelian scheme on $X$ such that the $\ell^\infty$-torsion of the Tate-Shafarevich group $\Sha(\Acal'/X')$ of $\Acal' := f^*\Acal = \Acal \times_X X'$ is finite, then the $\ell^\infty$-torsion of the Tate-Shafarevich group $\Sha(\Acal/X)$ is finite.
\end{theorem}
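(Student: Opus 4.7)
The plan is a trace--pullback (transfer) argument exploiting the hypothesis that the generic degree $d$ of $f$ is a unit in $\Z_\ell$. I aim to show that the kernel of $f^*\colon \Sha(\Acal/X)[\ell^\infty] \to \Sha(\Acal'/X')[\ell^\infty]$ is finite; combined with the assumed finiteness of the target and the cofinite generation of the source from \cite[Lemma~4.28]{KellerSha}, this forces $\Sha(\Acal/X)[\ell^\infty]$ to be finite.

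First I would pass from $\Sha$ to étale cohomology with torsion coefficients via Kummer. Since $\ell$ is invertible on $X$, multiplication by $\ell^n$ on $\Acal$ is a finite étale isogeny, so
\[
0 \to \Acal[\ell^n] \to \Acal \xrightarrow{\ell^n} \Acal \to 0
\]
is exact on the small étale site of $X$. Its long exact cohomology sequence yields a short exact sequence
\[
0 \to \Acal(X)/\ell^n \to \Het^1(X,\Acal[\ell^n]) \to \Sha(\Acal/X)[\ell^n] \to 0
\]
and the analogous one on $X'$, fitting into a commutative diagram with vertical pullbacks $f^*$.

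Next, because $\Acal[\ell^n]$ is a constructible $\Z/\ell^n$-sheaf on $X$ and $f$ is proper and generically étale of generic degree $d$ prime to $\ell$, one has a transfer
\[
f_*\colon \Het^i(X', f^*\Acal[\ell^n]) \to \Het^i(X,\Acal[\ell^n])
\]
satisfying $f_*\circ f^* = d\cdot \id$; it is built from the trace on the étale locus $U \subseteq X$ and extended by proper base change, using that $\ell^n$ is invertible on $X$. Since $d$ is a unit in $\Z/\ell^n$, the pullback $f^*$ on $\Het^1(X,\Acal[\ell^n])$ is split injective for every $n$. A snake-lemma chase in the diagram above then shows that $\ker(f^*)$ on $\Sha(\Acal/X)[\ell^n]$ injects into the cokernel $\Acal'(X')/(f^*\Acal(X) + \ell^n \Acal'(X'))$; the same transfer formula applied in degree zero bounds this cokernel uniformly in $n$, since $\Acal(X)$ and $\Acal'(X')$ are finitely generated by Lang--Néron.

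Passing to the direct limit in $n$ yields a finite kernel for $f^*$ on $\Sha(\Acal/X)[\ell^\infty]$, and the conclusion follows. The main obstacle is the construction of the transfer $f_*$ for the alteration $f$ (rather than a finite étale cover) and the identity $f_*\circ f^* = d\cdot \id$; this is exactly the step that relies crucially on $\ell$ being invertible on $X$ and on constructibility of $\Acal[\ell^n]$, and precisely the step that fails for the $p$-primary part, motivating the new technical inputs (the finiteness of $\Het^1_\fppf$ for finite flat group schemes and the vanishing of $\H^i_Z$ up to $p$-torsion) that occupy the rest of the paper.
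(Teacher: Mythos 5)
Your overall strategy (a transfer with $f_*\circ f^*=d$ and $d\in\Z_\ell^\times$, so that the kernel of $f^*$ on the $\ell$-primary part is controlled) is the same mechanism the paper uses, but two of your steps do not work as stated. First, the construction of the transfer for the alteration $f$ is only asserted: ``built from the trace on the étale locus and extended by proper base change'' is not a construction — proper base change computes stalks of $\R^qf_*$, it does not extend a trace map from $U$ to $X$. A transfer for a proper, generically finite $f$ does exist when $X,X'$ are regular and $\ell$ is invertible (via duality/cycle classes), but the quoted statement only assumes normality, and the paper deliberately avoids any global transfer: it uses the Néron mapping property $\Acal\isoto g_*g^*\Acal$ and the Leray spectral sequence to embed $\H^1_{\et}(X,\Acal)\hookrightarrow \H^1(K,\Acal_K)$, and then only needs the elementary trace (corestriction) for the \emph{finite étale} generic fibre $f_\eta$, which kills $\ker(f^*)$ by $d$ and hence makes it vanish on $\ell^\infty$-torsion.

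Second, and more seriously, the final bounding step is false. From the snake lemma you get $\ker\bigl(f^*|_{\Sha(\Acal/X)[\ell^n]}\bigr)\hookrightarrow \Acal'(X')/\bigl(f^*\Acal(X)+\ell^n\Acal'(X')\bigr)$, but finite generation of $\Acal(X)$ and $\Acal'(X')$ does \emph{not} bound this cokernel uniformly in $n$: whenever $\rk\Acal'(X')>\rk\Acal(X)$ (rank can jump in the extension $K'/K$), its order grows like $\ell^{n(\rk\Acal'(X')-\rk\Acal(X))}$. The transfer in degree zero gives $N\circ f^*=d$, which controls the \emph{kernel} of $\Acal(X)/\ell^n\to\Acal'(X')/\ell^n$, not its cokernel, so your chase does not yield a finite kernel for $f^*$ on $\Sha(\Acal/X)[\ell^\infty]$. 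To repair it within your framework you would need the transfer to be compatible with the Kummer connecting maps (so that for $s\in\ker(f^*)$, lifting to $t$ with $f^*t=\kappa'(b)$ gives $d\,t=f_*\kappa'(b)=\kappa(Nb)$ and hence $d\,s=0$), i.e.\ a transfer at the level of the exact sequence $0\to\Acal[\ell^n]\to\Acal\to\Acal\to0$ — which is exactly what working at the generic point (Galois cohomology corestriction, as in the paper's Step~3 together with Steps~1–2 via $f_*\Acal'$ and cofinite generation) provides for free.
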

In this article, we prove it also for $\ell$ equal to the characteristic of $k$ and remove the condition that the generical degree is prime to $\ell$ if $\ell$ is invertible on $X$:
\begin{maintheorem} (invariance of finiteness of $\Sha$ under alterations, \cref{thm:isotrivialppart} and~\cref{descent-of-finiteness-of-Sha-under-alterations})
	Let $f: X' \to X$ be a proper, surjective, generically finite morphism of generical degree $d$ of regular, integral, separated varieties over a finite field of characteristic $p > 0$.  Let $\Acal$ be an abelian scheme on $X$ and $\Acal' := f^*\Acal = \Acal \times_X X'$.  Let $\ell$ be an arbitrary prime. Assume $(d,\ell) = 1$ if $\ell = p$.  If $\Sha(\Acal'/X')[\ell^\infty]$ is finite, so is $\Sha(\Acal/X)[\ell^\infty]$.
\end{maintheorem}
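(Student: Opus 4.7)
The strategy combines the trace identity $f_*\circ f^* = [d]$ with the cofinite generation of $\Sha(\Acal/X)[\ell^\infty]$---the latter being available for $\ell$ prime to $p$ by \cite{KellerSha} and for $\ell = p$ by \cref{lemma:ShapinftyCofinitelyGenerated} above. Recall that $\Sha(\Acal/X) = \Het^1(X,\Acal)$ by definition, and that pullback under $f$ gives $f^*\colon \Het^1(X,\Acal) \to \Het^1(X',\Acal')$. The key additional input is a trace $f_*\colon \Het^1(X',\Acal') \to \Het^1(X,\Acal)$ with $f_*\circ f^* = [d]$. For $f$ finite locally free this is classical. In general one restricts to a dense open $U\subseteq X$ over which $f$ is finite locally free---its complement has codimension $\geq 2$ in $X$ since $X$ and $X'$ are regular and $f$ is generically finite---and transports the trace back to $X$ using \cref{vanishing-hypothesis}, which gives $\H^i_Z(X,\Acal) = 0$ for $i\leq 2$ up to $p$-torsion and hence makes $\Het^i(X,\Acal) \to \Het^i(U,\Acal)$ an isomorphism in the relevant degrees up to $p$-torsion (and similarly for $X'$).

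\textbf{Case $\ell = p$ with $(d,p) = 1$.} Here $d$ is a unit in $\Z_p$, so $[d]$ is an automorphism of any $p$-primary torsion abelian group. The factorization $f_*\circ f^* = [d]$ then forces $f^*$ to be injective on $\Sha(\Acal/X)[p^\infty]$. Consequently $\Sha(\Acal/X)[p^\infty] \hookrightarrow \Sha(\Acal'/X')[p^\infty]$, and finiteness of the right-hand side implies finiteness of the left.

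\textbf{Case $\ell\neq p$.} Now $d$ and $\ell$ may share common factors. The kernel of $f^*$ on $\Sha(\Acal/X)[\ell^\infty]$ is annihilated by $d$, hence contained in the bounded-exponent subgroup $\Sha(\Acal/X)[\ell^{v_\ell(d)}]$. Cofinite generation of $\Sha(\Acal/X)[\ell^\infty]$ makes this subgroup finite, so $\ker f^*$ is finite. The image of $f^*$ lies in the finite group $\Sha(\Acal'/X')[\ell^\infty]$, and we conclude that $\Sha(\Acal/X)[\ell^\infty]$ is finite.

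\textbf{Main obstacle.} The delicate point is constructing the trace $f_*$ at the cohomological level and verifying $f_* \circ f^* = [d]$ when $f$ is merely proper generically finite, possibly ramified or non-flat over a codimension-$\geq 2$ subscheme. This rests on using \cref{vanishing-hypothesis} to identify local cohomology along codimension-$\geq 2$ loci as at worst $p$-primary, so that the classical trace over $U$ transports to $X$ either exactly (for $\ell\neq p$) or, for $\ell = p$, in a form compatible with the $p^\infty$-torsion. In the latter case the hypothesis $(d,p) = 1$ is precisely what rescues the argument: once $f_*\circ f^* = [d]$ is available on $\Sha(\Acal/X)[p^\infty]$, the invertibility of $d$ in $\Z_p$ immediately yields injectivity of $f^*$, bypassing any finer handling of $p$-torsion correction terms.
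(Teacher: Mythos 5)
Your argument stands or falls with the transport of the trace identity $f_*\circ f^*=[d]$ from the open subset $U$ (over which $f$ is finite locally free) back to $X$, and for the prime $\ell=p$ -- the only case in which the theorem is new -- this transport is exactly what \cref{vanishing-hypothesis} does \emph{not} give you: that corollary says $\H^1_Z(X,\Acal)$ and $\H^2_Z(X,\Acal)$ are at worst $p$-primary, so the restriction $\Het^1(X,\Acal)[p^\infty]\to\Het^1(U,\Acal)[p^\infty]$ need not be injective, and the relation $f_*\circ f^*=[d]$ on $\Sha(\Acal/X)[p^\infty]$ is never established. The hypothesis $(d,p)=1$ cannot ``rescue'' an identity you do not have; you name this as the main obstacle but offer no way around it, so the $p$-part of your proof is a genuine gap. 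The paper avoids the open subset entirely: for the $p$-part it injects $\Het^1(X,\Acal)$ into the cohomology of the generic point via the N\'eron mapping property $\Acal\isoto g_*g^*\Acal$ (\'etale topology) together with the Leray spectral sequence (Step~3 of the proof of \cref{thm:isotrivialppart}), applies the trace of \cref{lemma:SpurabbildungInFlacherKohomologie} to the finite \emph{\'etale} morphism $f_\eta$ (note that $(d,p)=1$ forces $f$ to be generically \'etale), and then concludes with the cofinite generation of $\Sha(\Acal/X)[p^\infty]$ (\cref{lemma:ShapinftyCofinitelyGenerated}), which in turn rests on the fppf finiteness theorem \cref{thm:H1fppfofFiniteFlatGroupSchemeOverFiniteFieldIsFinite}. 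None of this mechanism (or a substitute for it) appears in your proposal.

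For $\ell\neq p$ the reduction to $U$ does work on $\ell$-primary parts, since by \cref{vanishing-hypothesis} the kernel of restriction is $p$-primary; but two inputs are asserted rather than proved. First, you need a trace with $f_*\circ f^*=[d]$ for a finite locally free, \emph{not necessarily \'etale}, morphism with coefficients in the abelian scheme $\Acal$: the paper's \cref{lemma:SpurabbildungInFlacherKohomologie} treats only finite \'etale $f$ and its Galois-closure proof does not extend, so you would have to invoke Deligne's trace for finite flat morphisms (SGA~4, Exp.~XVII), whereas the paper sidesteps this by the Stein factorization (\cref{Stein factorization}), handling the purely inseparable finite piece by the topological invariance of the \'etale site and the generically \'etale piece by \cref{thm:isotrivialppart}. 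Second, the claim that the locus where $f$ fails to be finite locally free has codimension $\geq 2$ is correct over a regular base but is not immediate: it again requires the Stein factorization, Zariski's main theorem for the modification part, and flatness of the finite part over codimension-one points -- essentially the d\'evissage carried out in \cref{descent-of-finiteness-of-Sha-under-alterations}. So the $\ell\neq p$ half of your argument is repairable along your lines with these additional inputs, but the $\ell=p$ half, as written, fails.
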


\paragraph{Notation.}
Canonical isomorphisms are often denoted by ``$=$''. We denote Pontrjagin duality by $(-)^D$ and duals of Abelian schemes and Cartier duals by $(-)^t$.

For a scheme $X$, we denote the set of codimension-$1$ points by $X^{(i)}$ and the set of closed points by $|X|$.

For an abelian group $A$, let $A_\tors$ be the torsion subgroup of $A$, and $A_\Tors = A/A_\tors$.  For $A$ a cofinitely generated $\ell$-primary group, let $A_\divv$ be the maximal divisible subgroup of $A$, which equals the subgroup of divisible elements of $A$ in this case (\cite[Lemma~2.1.1\,(iii)]{KellerGoodReduction}), and $A_\Div = A/A_\divv$.  For an integer $n$ and an object $A$ of an abelian category, denote the cokernel of $A \stackrel{n}{\to} A$ by $A/n$ and its kernel by $A[n]$, and for a prime $p$ the $p$-primary subgroup $\colim_n A[p^n]$ by $A[p^\infty]$.  Write $A[\text{non-}p]$ or $A[p']$ for $\varinjlim_{p \nmid n}A[n]$.  For a prime $\ell$, let the $\ell$-adic Tate module $T_\ell A$ be $\varprojlim_n A[\ell^n]$ and the rationalized $\ell$-adic Tate module $V_\ell A = T_\ell A \otimes_{\Z_\ell} \Q_\ell$.  The corank of $A[p^\infty]$ is the $\Z_p$-rank of $A[p^\infty]^D = T_pA$.

\section{Vanishing of étale cohomology with supports of Abelian schemes}

This is a complement to the ``vanishing condition'' $\H^i_Z(X,G) = 0$ from~\cite[(4.4)]{KellerSha}, which is proven there only for Jacobians of curves, see~\cite[Lemma 4.10]{KellerSha}.

\begin{theorem} \label[theorem]{thm:HiZXG=0}
	Let $X$ be a regular integral Noetherian separated scheme and $G/X$ be a finite étale commutative group scheme of order invertible on $X$.  Let $Z \hookrightarrow X$ be a closed subscheme of codimension $\geq 2$.  Then $\H^i_Z(X,G) = 0$ for $i \leq 2$ (étale cohomology with supports in $Z$).
\end{theorem}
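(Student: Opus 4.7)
The strategy is to combine Gabber's absolute cohomological purity theorem with a Noetherian induction on the support of $Z$. Recall that for a closed immersion $W \hookrightarrow Y$ of regular Noetherian schemes of pure codimension $c$ and a finite étale commutative group scheme $G$ of order $n$ invertible on $Y$, absolute purity yields an isomorphism
\[
\H^i_W(Y, G) \cong \H^{i-2c}(W, G|_W \otimes \mu_n^{\otimes -c}),
\]
so the left-hand side vanishes whenever $i < 2c$. Since $c \geq 2$ in our situation, this already covers the range $i \leq 3$, which is more than we need.

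First replace $Z$ by $Z_\mathrm{red}$, which does not affect $\H^*_Z(X, -)$, and proceed by Noetherian induction on the underlying closed subset $\abs{Z} \subseteq X$. The base case $Z = \emptyset$ is trivial. For the inductive step, let $Z^\mathrm{sing} \subsetneq Z$ denote the singular locus of $Z$, a proper closed subset of strictly smaller dimension and still of codimension $\geq 2$ in $X$. Set $U := X \setminus Z^\mathrm{sing}$ and $V := Z \setminus Z^\mathrm{sing}$; then $U$ and $V$ are regular, and $V \hookrightarrow U$ is a regular closed immersion. Decomposing $V = \bigsqcup_j V_j$ into connected components, each $V_j$ has some pure codimension $c_j \geq 2$ in $U$, so absolute purity gives
\[
\H^i_V(U, G) = \bigoplus_j \H^{i-2c_j}(V_j, G|_{V_j} \otimes \mu_n^{\otimes -c_j}) = 0 \quad \text{for } i \leq 2.
\]
Combining this with the induction hypothesis $\H^i_{Z^\mathrm{sing}}(X, G) = 0$ for $i \leq 2$ and the excision long exact sequence
\[
\H^i_{Z^\mathrm{sing}}(X, G) \to \H^i_Z(X, G) \to \H^i_V(U, G)
\]
yields $\H^i_Z(X, G) = 0$ for $i \leq 2$, completing the induction.

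The heavy lifting is done by Gabber's absolute purity theorem; the rest of the argument is a routine dévissage. The only mild subtlety is that $Z$ may have irreducible components of mixed codimension, which is handled by decomposing the regular open stratum $V$ into its connected components so that purity can be applied componentwise with a well-defined codimension on each piece. I do not expect a serious obstacle beyond invoking the correct form of absolute purity in the generality of regular Noetherian (not necessarily equicharacteristic or excellent) schemes.
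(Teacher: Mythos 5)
Your argument is correct in outline but takes a genuinely different route from the paper. The paper never invokes Gabber's absolute purity: it works with the long exact sequence for the pair $(X,U)$, $U=X\setminus Z$, and shows that the restriction $\H^i(X,G)\to \H^i(U,G)$ is bijective for $i=0,1$ and injective for $i=2$, using the extension theorem for rational maps into smooth separated group schemes (Bosch--L\"utkebohmert--Raynaud), Zariski--Nagata purity to extend finite \'etale torsors across codimension $\geq 2$, and, for $i=2$, a reduction to $G=\mu_n$ via a trivializing finite \'etale Galois cover together with $\Pic(X)\isoto\Pic(U)$ and $\Br(X)\hookrightarrow\Br(U)$. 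Your route buys a stronger and more uniform statement (vanishing in all degrees $i<2\codim$, hence up to $i=3$, with no case distinction), at the price of the much heavier input of absolute purity; the paper's route uses only classical ingredients and, importantly, never has to stratify $Z$.

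The one genuine gap is the step \enquote{let $Z^{\mathrm{sing}}\subsetneq Z$ denote the singular locus of $Z$, a proper closed subset}. For a reduced Noetherian scheme the regular locus always contains the maximal points (so it is dense), but it need \emph{not} be open: openness of the regular locus is a J-1/quasi-excellence type condition, and the theorem is stated for an arbitrary regular integral Noetherian separated $X$, with no excellence hypothesis on $X$ or on its closed subschemes. As written, $Z^{\mathrm{sing}}$ need not be closed, so the group $\H^i_{Z^{\mathrm{sing}}}(X,G)$ and the excision sequence you use are not available, and the Noetherian induction does not get off the ground. This is harmless in all of the paper's applications ($X$ a variety over a finite field, hence excellent, so closed subschemes are J-1), and there your proof is complete; but in the stated generality you must either add a quasi-excellence hypothesis, justify openness of the regular locus of $Z$, or switch to an argument (like the paper's) that only ever uses the open complement $U$ and never stratifies $Z$. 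The remaining ingredients are fine: the form of absolute purity you need (closed immersions of regular Noetherian schemes, $n$ invertible, no excellence) is indeed available in the literature on Gabber's theorem, your componentwise reduction to pure codimension works because a connected regular scheme is irreducible and an irreducible closed subset has a well-defined codimension at all of its points, and the appeal to \enquote{strictly smaller dimension} is unnecessary --- Noetherian induction on closed subsets suffices.
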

\begin{proof}
	Let $U = X \setminus Z$.  One has a long exact cohomology sequence
	\begin{align*}
	\ldots \to \H^{i-1}(X,G) \to \H^{i-1}(U,G) \to \\\quad\H^i_Z(X,G) \to \H^i(X,G) \to \H^i(U,G) \to \ldots,
	\end{align*}
	so one has to prove that $\H^i(X,G) \to \H^i(U,G)$ is an isomorphism for $i = 0,1$ and injective for $i = 2$.
	
	For $i = 0$, the claim $\H^i_Z(X, G) = 0$ is equivalent to the injectivity of
	\[
	\H^0(X, G) \to \H^0(U, G),
	\]
	which is clear from~\cite[p.~105, Exercise~II.4.2]{HartshorneAG} since $G/X$ is separated, $X$ is reduced and $U \hookrightarrow X$ is dense.
	
	For $i = 1$ the claim $\H^i_Z(X, G) = 0$ is equivalent to
	\[
	\H^0(X, G) \to \H^0(U, G)
	\]
	being surjective and
	\[
	\H^1(X, G) \to \H^1(U, G)
	\]
	being injective. The surjectivity of $\H^0(X, G) \to \H^0(U, G)$ follows e.\,g.\ from
	\begin{theorem} \label[theorem]{thm:rationalmapextends}
		Let $S$ be a normal Noetherian base scheme, and let $u: T \dashrightarrow G$ be an $S$-rational map from a smooth $S$-scheme $T$ to a smooth and separated $S$-group scheme $G$. Then, if $u$ is defined in codimension $\leq 1$, it is defined everywhere.
	\end{theorem}
	\begin{proof}
		See~\cite[p.~109, Theorem~1]{BLR}.
	\end{proof}
	
	For the injectivity of $\H^1(X, G) \to \H^1(U, G)$: If a principal homogeneous space $P/X$ for $G/X$ is trivial over $U$, then it is trivial over $X$:  The trivialization over $U$ gives a rational map from $X$ to the principal homogeneous space and any such map (with $X$ a regular scheme) extends to a morphism by~\cref{thm:rationalmapextends}.
	
	For the surjectivity of $\H^1(X, G) \to \H^1(U, G)$:  This means that any principal homogeneous space $P/U$ extends to a principal homogeneous space $\bar{P}/X$.  By~\cite[p.~123, Corollary~III.4.7]{MilneEtaleCohomology}, we have $\mathrm{PHS}(G/X) \isoto \H^1(X_{\mathrm{fl}}, G)$ (\v{C}ech cohomology) since $G/X$ is affine.  Since $G/X$ is smooth, \cite[p.~123, Remark~III.4.8\,(a)]{MilneEtaleCohomology} shows that we can take étale cohomology as well, and by~\cite[p.~101, Corollary~III.2.10]{MilneEtaleCohomology}, one can take derived functor cohomology instead of \v{C}ech cohomology.  Recall:
	
	\begin{theorem}[Zariski-Nagata purity]\label[theorem]{Zariski-Nagata purity}
		Let $X$ be a locally Noetherian regular scheme and $U$ an open subscheme with closed complement of codimension $\geq 2$. Then the functor $X' \mapsto X' \times_X U$ is an equivalence of categories from \'{e}tale coverings of $X$ to \'{e}tale coverings of $U$.
	\end{theorem}
	\begin{proof}
		See~\cite[Exp.~X, Corollaire~3.3]{SGA1}.
	\end{proof}
	By~\cref{Zariski-Nagata purity}, one can extend $P/U$ uniquely to a $\bar{P}/X$, for which we have to show that it represents an element of $\H^1(X,G)$, i.\,e., that it is a $G$-torsor.
	
	So we need to show that if $P/U$ is an $G|_U$-torsor and $\bar{P}$ an extension of $P$ to a finite étale covering of $X$, then $\bar{P}/X$ is also an $G$-torsor.  For this, we use the following
	\begin{theorem} \label[theorem]{thm:TorsorBedingung}
		Let $X$ be a connected scheme, $G \to X$ a finite flat group scheme, and $\bar{P} \to X$ a scheme over $X$ equipped with a left action $\rho: G \times_X \bar{P} \to \bar{P}$.  These data define a $G$-torsor over $X$ if and only if there exists a finite locally free surjective morphism $Y \to X$ such that $\bar{P} \times_X Y \to Y$ is isomorphic, as a $Y$-scheme with $G \times_X Y$-action, to $G \times_X Y$ acting on itself by left translations.
	\end{theorem}
	\begin{proof}
		See~\cite[p.~171, Lemma~5.3.13]{SzamuelyGalois}.
	\end{proof}
	That $P/U$ is an $G|_U$-torsor amounts to saying that there is an operation
	\[
	G|_U \times_U P \to P
	\]
	as in the previous~\cref{thm:TorsorBedingung}.  Since this is étale locally isomorphic to the canonical action
	\[
	G|_U \times_U G|_U \stackrel{\mu}{\to} G|_U
	\]
	which is finite étale, by faithfully flat descent the operation defines an étale covering, so extends by Zariski-Nagata purity (\cref{Zariski-Nagata purity}) uniquely to an étale covering $H \to X$, which by uniqueness has to be isomorphic to $G \times_{X} \bar{P} \to \bar{P}$.  Now a routine check shows the condition in~\cref{thm:TorsorBedingung}.
	
	There is a finite étale Galois covering $X'/X$ with Galois group $G$ such that $G \times_X X'$ is isomorphic to a direct sum of $\mu_n$ with $n$ invertible on $X$.  The Leray spectral sequence with supports $\H^p(G,\H^q_{Z'}(X',G \times_X X')) \Rightarrow \H^{p+q}_Z(X,G)$ from~\cite[p.~228, Theorem~4.9]{KellerSha}, so it suffices to show the vanishing $\H^q_Z(X',G \times_X X') = 0$ for $q = 0,1,2$.  Hence one can assume $G \iso \mu_n$ for $n$ invertible on $X$.
	
	By~\cite[Example~III.2.22]{MilneEtaleCohomology}, one has an injection $\Br(X) \hookrightarrow \Br(K(X))$ with $K(X)$ the function field of $X$ and $\Br(X) \to \Br(U) \to \Br(K(X))$, so $\Br(X) \to \Br(U)$ is injective.  By the hypotheses on $X$ and since the codimension of $Z$ in $X$ is $\geq 2$, there is a restriction isomorphism $\Pic(X) \isoto \Pic(U)$ (because of the codimension condition and~\cite[Proposition~II.6.5\,(b)]{HartshorneAG}, $\Cl{X} \isoto \Cl{U}$, and because of~\cite[Proposition~II.6.16]{HartshorneAG}, $\Cl{X} \iso \Pic{X}$ functorial in the scheme).  Hence the snake lemma applied to the diagram
	\[\begin{tikzcd}
		0 \ar[r] &
		\Pic(X)/n \ar[d] \ar[r] &
		\H^2(X,\mu_n) \ar[d] \ar[r] &
		\Br(X)[n] \ar[d,hookrightarrow] \ar[r] &
		0  \\
		0 \ar[r] &
		\Pic(U)/n \ar[r] &
		\H^2(U,\mu_n) \ar[r] &
		\Br(U)[n] \ar[r] &
		0
	\end{tikzcd}
	\]
	gives that $\H^2(X,\mu_n) \to \H^2(U,\mu_n)$ is injective, so $\H^2_Z(X,\mu_n) = 0$.
\end{proof}

\begin{corollary}\label[corollary]{vanishing-hypothesis}
	Let $X$ be a regular integral Noetherian separated scheme and $\Acal/X$ be an Abelian scheme.  Let $Z \hookrightarrow X$ be a closed subscheme of codimension $\geq 2$.  Then $\H^i_Z(X,\Acal)$ is torsion for all $i$. Furthermore, $\H^0_Z(X,\Acal) = 0$, and for $i = 1,2$, the only possible torsion is $p$-torsion for $p$ not invertible on $X$.
\end{corollary}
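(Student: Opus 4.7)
The strategy is to apply \cref{thm:HiZXG=0} to the $n$-torsion subscheme $\Acal[n]$ for each $n$ invertible on $X$ via the Kummer sequence, to handle the degree-zero vanishing by a separation argument, and to obtain the global torsion claim via the excision long exact sequence.

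For $n$ invertible on $X$, multiplication by $n$ is a finite étale surjection on $\Acal$, so
$$0 \to \Acal[n] \to \Acal \xrightarrow{n} \Acal \to 0$$
is exact in the étale topology, with $\Acal[n]$ a finite étale commutative $X$-group scheme of order invertible on $X$. \cref{thm:HiZXG=0} gives $\H^i_Z(X, \Acal[n]) = 0$ for $i \leq 2$. Extracting from the long exact sequence of cohomology with supports the short exact sequences
$$0 \to \H^j_Z(X, \Acal)/n \to \H^{j+1}_Z(X, \Acal[n]) \to \H^{j+1}_Z(X, \Acal)[n] \to 0,$$
the vanishing of the middle term for $j+1 \leq 2$ shows that $\H^j_Z(X, \Acal)$ is $n$-divisible for $j \leq 1$ and has no $n$-torsion for $j \leq 2$. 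Ranging over all $n$ invertible on $X$, the torsion in $\H^j_Z(X, \Acal)$ for $j = 1, 2$ must be $p$-primary for primes $p$ not invertible on $X$.

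The degree-zero vanishing $\H^0_Z(X, \Acal) = 0$ is a separation argument: the group is $\ker(\Acal(X) \to \Acal(U))$ for $U := X \setminus Z$, and since $X$ is reduced (being regular), $U$ is dense in $X$ (as $Z$ has codimension $\geq 2$), and $\Acal/X$ is separated, any section of $\Acal$ over $X$ restricting to the zero section on $U$ agrees with the zero section on all of $X$.

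For the claim that $\H^i_Z(X, \Acal)$ is torsion for all $i$, I plan to use the excision long exact sequence
$$\ldots \to \H^{i-1}(U, \Acal) \to \H^i_Z(X, \Acal) \to \H^i(X, \Acal) \to \H^i(U, \Acal) \to \ldots$$
together with \cref{thm:rationalmapextends}, which forces $\H^0(X, \Acal) \to \H^0(U, \Acal)$ to be surjective (since $\Acal$ is smooth and separated over the normal base $X$), and the standard fact that $\H^i(X, \Acal)$ is torsion in positive degree for an abelian scheme over a regular Noetherian integral base. The main obstacle is really this last external input: \cref{thm:HiZXG=0} controls only the prime-to-$p$ torsion and only in degrees $\leq 2$, so torsion-ness in higher degrees and for the $p$-primary parts of $\H^1, \H^2$ cannot be extracted from \cref{thm:HiZXG=0} alone but must be imported from a general torsion statement for cohomology of abelian schemes.
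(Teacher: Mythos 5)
Your proposal is correct and follows essentially the same route as the paper: the Kummer sequence for $n$ invertible on $X$ combined with \cref{thm:HiZXG=0} to kill prime-to-$p$ torsion in degrees $1,2$, the separatedness/density argument for $\H^0_Z(X,\Acal)=0$, and the excision sequence plus the external fact that $\H^i(X,\Acal)$ is torsion in positive degrees (which the paper imports from~\cite[Proposition~4.1]{KellerSha}) for the global torsion claim. The extra $n$-divisibility observation is harmless but not needed; only the surjection $\H^i_Z(X,\Acal[n]) \twoheadrightarrow \H^i_Z(X,\Acal)[n]$ is used.
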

\begin{proof}
	By~\cite[p.~224, Proposition~4.1]{KellerSha}, $\H^i(X,\Acal)$ is torsion for $i > 0$.  The Kummer exact sequence $0 \to \Acal[n] \to \Acal \to \Acal \to 0$ for $n$ invertible on $X$ yields a surjection
	\[
	\H^i_Z(X,\Acal[n]) \twoheadrightarrow \H^i_Z(X,\Acal)[n],
	\]
	so it suffices to show that $\H_Z^i(X,\Acal[n]) = 0$ for $i=1,2$.  But this is~\cref{thm:HiZXG=0}.  The triviality $\H_Z^0(X,\Acal) = 0$ is equivalent to the injectivity of
	\[
	\H^0(X, \Acal) \to \H^0(U, \Acal),
	\]
	which is clear from~\cite[p.~105, Exercise~II.4.2]{HartshorneAG} since $\Acal/X$ is separated, $X$ is reduced and $U \hookrightarrow X$ is dense.
\end{proof}

With vanishing condition~(4.4) in~\cite[Theorem~4.5]{KellerSha} satisfied for $\Acal/X$ by~\cref{vanishing-hypothesis}, the statement there generalizes from $\Acal$ a Jacobian to $\Acal$ a general Abelian scheme:

\begin{theorem} \label[theorem]{thm:codim1Punktereichen}
	Let $X$ be regular, Noetherian, integral and separated and let $\Acal$ be an Abelian scheme over $X$.  For $x \in X$, denote the function field of $X$ by $K$, the quotient field of the strict Henselization of $\Ocal_{X,x}$ by $K_x^{nr}$, the inclusion of the generic point by $j: \{\eta\} \hookrightarrow X$ and let $j_x: \Spec(K_x^{nr}) \hookrightarrow \Spec(\Ocal_{X,x}^{sh}) \hookrightarrow X$ be the composition. Then we have
	\[
	\H^1(X, \Acal) \isoto \ker\Big(\H^1(K, j^*\Acal) \to \prod_{x \in X} \H^1(K_x^{nr}, j_x^*\Acal) \Big).
	\]
	
	One can replace the product over all points by the following:
	
	(a) the closed points $x \in |X|$:  One has isomorphisms
	\begin{align*}
		\H^1(X, \Acal) \isoto \ker\Big(\H^1(K, j^*\Acal) \to \prod_{x \in |X|} \H^1(K_x^{nr}, j_x^*\Acal) \Big)
	\end{align*}
	and
	\begin{align*}
		\H^1(X, \Acal) \isoto \ker\Big(\H^1(K, j^*\Acal) \to \prod_{x \in |X|} \H^1(K_x^{h}, j_x^*\Acal) \Big)
	\end{align*}
	with $K_x^h = \Quot(\Ocal_{X,x}^h)$ the quotient field of the Henselization if $\kappa(x)$ is finite.
	
	or (b) the codimension-$1$ points $x \in X^{(1)}$:  One has an isomorphism
	\begin{align*}
		\H^1(X, \Acal) \isoto \ker\Big(\H^1(K, j^*\Acal) \to \bigoplus_{x \in X^{(1)}} \H^1(K_x^{nr}, j_x^*\Acal) \Big)
	\end{align*}
	if one disregards the $p$-torsion ($p = \Char{k}$) and $X/k$ is smooth projective over $k$ finitely generated.  For $\dim{X} \leq 2$, this also holds for the $p$-torsion.
	
	For $x \in X^{(1)}$, one can also replace $K_x^{nr}$ and $K_x^h$ by the quotient field of the completions $\hat{\Ocal}_{X,x}^{sh}$ and $\hat{\Ocal}_{X,x}^h$, respectively.
\end{theorem}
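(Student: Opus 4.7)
The plan is to regard this theorem as \cite[Theorem~4.5]{KellerSha} applied to an arbitrary Abelian scheme. In that reference, the statement was established under the vanishing hypothesis \cite[(4.4)]{KellerSha}, namely that $\H^i_Z(X,\Acal) = 0$ for every closed $Z \hookrightarrow X$ of codimension $\geq 2$ and $i \leq 2$; the restriction to Jacobians was imposed only to guarantee this hypothesis via \cite[Lemma~4.10]{KellerSha}. Our \cref{vanishing-hypothesis} now verifies the same vanishing for arbitrary $\Acal$---unconditionally for $i = 0$, and up to $p$-primary torsion for $i = 1, 2$---so the proof of \cite[Theorem~4.5]{KellerSha} transfers without change, with the $p$-torsion caveats appearing precisely where they are stated.

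The mechanism runs as follows. For $Z$ closed of codimension $\geq 2$ and $U = X \setminus Z$, the local cohomology sequence
\begin{equation*}
\H^1_Z(X,\Acal) \to \H^1(X,\Acal) \to \H^1(U,\Acal) \to \H^2_Z(X,\Acal)
\end{equation*}
combined with \cref{vanishing-hypothesis} yields $\H^1(X,\Acal) \isoto \H^1(U,\Acal)$ up to $p$-torsion, and unconditionally when $\dim X \leq 2$ by \cite[Lemma~4.15]{KellerSha}. Stratifying $X$ by the closure of points of each codimension and assembling a local-to-global spectral sequence, the main identification
\begin{equation*}
\H^1(X,\Acal) \isoto \ker\Bigl(\H^1(K, j^*\Acal) \to \prod_{x \in X} \H^1(K_x^{nr}, j_x^*\Acal)\Bigr)
\end{equation*}
emerges, because each strict Henselian local ring $\Ocal_{X,x}^{sh}$ has trivial positive-degree étale cohomology of smooth commutative group schemes in the relevant range, leaving only the generic-fiber contribution at every point.

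For the refinements: in (a), restricting the product to closed points uses that every point of $X$ specializes to a closed one through étale neighbourhoods of its strict Henselization, so any local obstruction at a non-closed point is already detected at its closed specializations. The substitution of $K_x^{nr}$ by $K_x^h$ at a point with finite residue field is a Hochschild--Serre argument for the pro-cyclic extension $\Gal(K_x^{nr}/K_x^h) = \hat{\Z}$, which has cohomological dimension $1$ and lets one identify the two kernels. For (b), restricting further to codimension-one points is one more excision step removing the codimension-$\geq 2$ stratum from $X^{(1)}$, and this is where the $p$-torsion caveat enters: \cref{vanishing-hypothesis} only provides prime-to-$p$ vanishing of $\H^i_Z$ for $i = 1, 2$ in general, with the full vanishing available only for $\dim X \leq 2$ by \cite[Lemma~4.15]{KellerSha}. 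Finally, the replacement of strict (resp.\ ordinary) Henselizations by the quotient fields of their completions is a standard flatness/excellence argument, the residue fields and unramified Galois actions being unchanged.

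The main obstacle is bookkeeping rather than geometry: one must track the $p$-primary part through the spectral sequence so that the caveats in part (b) and the precise form of the statement are respected, since \cref{vanishing-hypothesis} offers weaker control at $p$ than was available in the Jacobian setting. No new geometric input is required beyond \cref{vanishing-hypothesis}.
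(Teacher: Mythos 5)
Your proposal is correct and takes essentially the same route as the paper: the paper gives no independent argument for this theorem, but simply observes that \cref{vanishing-hypothesis} supplies the vanishing hypothesis (4.4) of \cite[Theorem~4.5]{KellerSha} for an arbitrary Abelian scheme, so that result applies verbatim --- which is precisely your opening reduction. Your additional sketch of the internal mechanism of the cited theorem (excision, restriction to closed and codimension-one points, the $K_x^h$ versus $K_x^{nr}$ comparison) reconstructs details the paper does not reproduce and is consistent with it.
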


\section{Finiteness theorems for $\H^1_\fppf$ over finite fields}

The aim of this section is to show that $\H^1_\fppf(X,G)$ is finite for $X$ a normal proper variety over a finite field of characteristic $p$ and $G/X$ a finite flat group scheme.

The proof is by reduction to the case of a finite flat \emph{simple} group scheme over an algebraically closed field, which is isomorphic to $\Z/\ell$ (étale-étale), $\Z/p$ (étale-local), $\mu_p$ (local-étale) or $\alpha_p$ (local-local).

We use the interpretation of $\H^1_\fppf(X,G)$ as $G$-torsors on $X$~\cite[Proposition~III.4.7]{MilneEtaleCohomology} since $G/X$ is affine.  We also exploit de Jong's alteration theorem~\cite[Theorem~4.1]{deJongAlterations}.

Let us first recall some well-known facts on flat cohomology.

\begin{definition} \label[definition]{def:isogeny}
	An \defn{isogeny} of commutative group schemes $G,H$ of finite type over an arbitrary base scheme $X$ is a group scheme homomorphism $f: G \to H$ such that for all $x \in X$, the induced homomorphism $f_x: G_x \to H_x$ on the fibers over $x$ is finite and surjective on identity components.
\end{definition}

\begin{remark}
	See~\cite[p.~180, Definition~4]{BLR}.  We will usually consider isogenies between abelian schemes, for example the finite flat $n$-multiplication, which is étale iff $n$ is invertible on the base scheme or the abelian schemes are trivial.
\end{remark}

\begin{lemma} \label[lemma]{groupschemes-epi}
	Let $G, G'$ be commutative group schemes over a scheme $X$ which are smooth and of finite type over $X$ with connected fibers and $\dim G = \dim G'$ and let $f: G' \to G$ be a morphism of commutative group schemes over $X$.
	
	If $f$ is flat (respectively, étale) then $\ker(f)$ is a flat (respectively, étale) group scheme over $X$, $f$ is quasi-finite, surjective and defines an epimorphism in the category of flat (respectively, étale) sheaves over $X$.
\end{lemma}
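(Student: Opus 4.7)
The plan is to handle the statement in three stages: structure of the kernel, fiberwise quasi-finiteness and surjectivity, and the passage from a surjective cover to an epimorphism of sheaves.

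First, I observe that $\ker(f)$ fits into the Cartesian square
\[
\begin{tikzcd}
\ker(f) \arrow[r] \arrow[d] & G' \arrow[d,"f"] \\
X \arrow[r,"e"] & G
\end{tikzcd}
\]
where $e \colon X \to G$ is the identity section. Thus $\ker(f) \to X$ is a base change of $f$, and since flatness and étaleness are both stable under arbitrary base change, $\ker(f) \to X$ inherits the relevant property from $f$.

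Next I would check quasi-finiteness and surjectivity fiberwise. Fix $x \in X$; then $f_x \colon G'_x \to G_x$ is a flat morphism between smooth connected commutative group schemes of the same dimension over $\kappa(x)$. The dimension formula for flat morphisms of finite type yields $\dim \ker(f_x) = \dim G'_x - \dim G_x = 0$, so $f_x$ is quasi-finite and therefore $f$ is quasi-finite. For surjectivity, recall that any flat morphism locally of finite presentation is open; the image of $f_x$ is thus an open subgroup of the connected algebraic group $G_x$. An open subgroup of a connected topological group is automatically closed (its complement is a disjoint union of open cosets), so connectedness forces the image to equal $G_x$. Since $f$ is surjective on each fiber, $f$ itself is surjective.

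Finally, once $f$ is faithfully flat and locally of finite presentation, it constitutes a single-morphism fppf covering of $G$; likewise a surjective étale morphism is an étale covering. In either topology, a representable cover induces an epimorphism of sheaves, which yields the last assertion of the lemma.

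The step deserving the most care is the surjectivity of $f_x$: the hypotheses ``$\dim G = \dim G'$'' and ``connected fibers'' enter precisely there to force the open image of $f_x$ to exhaust $G_x$. All remaining parts are formal consequences of stability of flatness and étaleness under base change and of the definition of coverings in the respective Grothendieck topologies.
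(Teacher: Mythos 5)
Your proof is correct. The paper gives no argument of its own here---it simply cites \cite[Lemma~2.3.3]{KellerGoodReduction}---and your proof is the standard one that such a lemma rests on: $\ker(f)$ as the base change of $f$ along the unit section $e\colon X \to G$, a fiberwise dimension count for quasi-finiteness, openness of flat locally finitely presented maps plus the coset argument and connectedness for surjectivity of $f_x$, and the fact that a surjective flat (resp.\ étale) morphism, being an fppf (resp.\ étale) cover, induces an epimorphism of sheaves via the base change $T\times_{s,G}G' \to T$ of a section $s\in G(T)$. If you write it up, make two points explicit: read the hypothesis $\dim G = \dim G'$ as equality of the relative (fiber) dimensions, and perform the coset argument after base change to an algebraic closure of $\kappa(x)$, where translates of the open image by rational points cover $G_x$ and surjectivity may be checked.
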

\begin{proof}
	See~\cite[Lemma~2.3.3]{KellerGoodReduction}.
\end{proof}

\begin{lemma}[Kummer sequence] \label[lemma]{lemma:pnKummer}
	Let $f: G \to G'$ be a faithfully flat isogeny between smooth commutative group schemes over a base scheme $X$. Then the sequence
	\begin{align*}
		0 \to \ker(f) \to G \stackrel{f}{\to} G' \to 0
	\end{align*}
	is exact on $X_\fppf$.
	This applies in particular to $G = \G_m$ and $G = \Acal$ an abelian scheme and the $n$-multiplication morphism for arbitrary $n \neq 0$.
\end{lemma}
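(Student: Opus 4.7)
The exactness of the sequence as a sequence of fppf sheaves on $X$ reduces to two statements: (i) $\ker(f)$ represents the kernel sheaf $T \mapsto \ker(G(T) \to G'(T))$, and (ii) $f: G \to G'$ is an epimorphism of fppf sheaves. Part (i) is immediate from the construction: the scheme-theoretic kernel $\ker(f) = G \times_{G', e} X$, pulled back along the identity section $e: X \to G'$, is a closed subgroup scheme of $G$ representing the kernel sheaf by the universal property of the fiber product. Hence the sequence is automatically left exact, and the whole problem reduces to verifying (ii).

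For (ii), I will use the tautological fppf covering. Given any $X$-scheme $T$ and any section $s \in G'(T)$, form the base change $T' := T \times_{G', s} G$, with its two projections $p_1: T' \to T$ and $p_2: T' \to G$. The map $p_1$ is the base change of $f$ along $s$; since $f$ is faithfully flat (by hypothesis) and of finite presentation (as $G$ and $G'$ are of finite type over $X$ by the definition of isogeny, with finite presentation holding whenever $X$ is locally Noetherian or $G, G'$ are of finite presentation, both of which hold in all applications here), $p_1$ is an fppf cover of $T$. The construction of the fiber product then gives $f \circ p_2 = s \circ p_1$, so $p_2$ provides a local lift of $s$ over the fppf cover $T' \to T$. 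This is exactly the criterion for $f$ to be an epimorphism in the fppf topology.

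For the concrete applications: the $n$-multiplication $[n]: \G_m \to \G_m$ is finite locally free of degree $n$, hence faithfully flat, with kernel $\mu_n$; and the $n$-multiplication $[n]: \Acal \to \Acal$ on an abelian scheme $\Acal/X$ is finite faithfully flat of degree $n^{2 \dim \Acal}$ with kernel $\Acal[n]$. In each case the hypotheses of the lemma are satisfied and the construction above yields the Kummer sequence. There is no serious obstacle here: the proof is essentially formal, the only technical point being the finite presentation of $f$ needed to call $p_1$ an fppf cover, which is harmless under the paper's standing assumptions.
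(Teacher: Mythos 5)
Your proof is correct and follows essentially the same route as the paper: both reduce the lemma to the fact that a faithfully flat, locally finitely presented homomorphism is an epimorphism of fppf sheaves, left exactness being formal from the representability of the scheme-theoretic kernel $G\times_{G',e}X$. The only difference is one of packaging: the paper simply cites its earlier lemma on flat homomorphisms of smooth group schemes for the epimorphism claim and Milne's Proposition~8.1 for the faithful flatness of $[n]$ on abelian schemes, whereas you verify the epimorphism directly via the tautological fppf cover $T\times_{G',s}G\to T$, a self-contained argument that, as a bonus, does not use smoothness or connectedness of fibers.
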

\begin{proof}
	Since $f$ is faithfully flat, in particular surjective, it is an epimorphism of sheaves by~\cref{groupschemes-epi}.  An isogeny of abelian schemes is faithfully flat by~\cite[Proposition~8.1]{MilneAbelianVarieties}.
\end{proof}

\begin{lemma} \label[lemma]{lemma:VergleichHifppfHiSYNHietSmoothGroupScheme}
	Let $G/X$ be a \emph{smooth} commutative group scheme.  Then there are comparison isomorphisms
	\begin{align*}
		\H^i_\fppf(X,G) = \H^i_\et(X,G).
	\end{align*}
	In particular, $\H^i_\fppf(X,G)$ is finite if $X$ is proper over a finite field and $G$ is a commutative finite étale group scheme.
\end{lemma}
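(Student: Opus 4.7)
The statement has two parts. For the comparison isomorphism, the plan is to invoke the change-of-site morphism $\epsilon: X_\fppf \to X_\et$ and the associated Leray spectral sequence
\[
\H^p_\et(X, R^q\epsilon_* G) \Longrightarrow \H^{p+q}_\fppf(X, G).
\]
It therefore suffices to show $R^q\epsilon_* G = 0$ for $q > 0$. This is a stalk computation: for a strictly Henselian local ring $R$, one must show $\H^q_\fppf(\Spec R, G) = 0$ for $q > 0$ whenever $G$ is smooth commutative. This is the classical theorem of Grothendieck comparing flat and étale cohomology of smooth commutative group schemes; the argument uses that, over $\Spec R$ with $R$ strictly Henselian, any fppf cover can be refined by an étale cover after restricting to a smooth $R$-scheme (smooth morphisms admit sections étale-locally), so Čech fppf and Čech étale cohomologies agree, and étale cohomology of the strictly Henselian $R$ is trivial in positive degrees. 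One may cite this as~\cite[Theorem~III.3.9]{MilneEtaleCohomology} (or SGA 3, Exp.~XXIV).

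For the finiteness statement, having reduced to étale cohomology, apply the Hochschild--Serre spectral sequence for the base change $\bar{X} := X \times_k \bar{k} \to X$:
\[
\H^p(\Gal(\bar{k}/k), \H^q_\et(\bar{X}, G)) \Longrightarrow \H^{p+q}_\et(X, G).
\]
Because $X$ is proper over $k$ and $G$ is finite étale (hence a constructible sheaf of finite abelian groups), the groups $\H^q_\et(\bar{X}, G)$ are finite by the standard finiteness theorem for étale cohomology of proper schemes over an algebraically closed field with finite constructible coefficients (Deligne, SGA 4½). Since $\Gal(\bar{k}/k) \iso \widehat{\Z}$ has cohomological dimension $1$ and the coefficient modules are finite, all terms $\H^p(\Gal(\bar{k}/k), \H^q_\et(\bar{X}, G))$ are finite and only finitely many are nonzero for each total degree, so $\H^i_\et(X, G)$ is finite.

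The main conceptual step is the vanishing $R^q\epsilon_* G = 0$ for smooth $G$; everything else is spectral sequence bookkeeping plus citation of standard finiteness theorems. Since this lemma is truly classical, I would just state it as a reference to~\cite[Theorem~III.3.9]{MilneEtaleCohomology} combined with the Hochschild--Serre argument, rather than reproduce the stalk-level proof.
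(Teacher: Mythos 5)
Your proposal is correct and takes essentially the same route as the paper: the paper simply cites \cite[Remark~III.3.11\,(b)]{MilneEtaleCohomology} for the flat--\'etale comparison for smooth commutative group schemes, and the stalkwise vanishing of $R^q\epsilon_*G$ over strictly Henselian local rings that you sketch is exactly the content of that reference. The finiteness assertion is left implicit in the paper as the standard finiteness of \'etale cohomology of proper varieties over a finite field with finite locally constant coefficients, which your Hochschild--Serre argument (using $\cd(\Gal(\bar{k}/k)) = 1$ and finiteness of $\H^q_\et(\bar{X},G)$) spells out correctly.
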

\begin{proof}
	See~\cite[Remark~III.3.11\,(b)]{MilneEtaleCohomology} and note that the proof given there gives a comparison isomorphism for any topologies between the étale and the flat site.
\end{proof}

\begin{lemma}
	Let $X$ be a Noetherian integral scheme with function field $K(X)$ and $U \subseteq X$ dense open.  Then there is an exact sequence
	\[
	1 \to \G_m(X) \to \G_m(U) \to \bigoplus_{D \in (X \setminus U)^{(1)}}\Z[D] \to \Cl(X) \to \Cl(U) \to 0.
	\]
\end{lemma}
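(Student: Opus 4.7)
The plan is to splice together two standard exact sequences — a three-term sequence relating units on $X$ and on $U$ to divisors supported on $X \setminus U$, and the classical restriction sequence for divisor class groups — and to check that the image of the first agrees with the kernel of the second. I would note at the outset that the statement implicitly requires $X$ to be normal (or at least regular in codimension one), since otherwise both $\Cl(X)$ and the valuations $\ord_D$ at codimension-one points $D \subseteq X$ fail to be well defined; this is harmless in the setting of the paper, where the ambient $X$ in~\cref{thm:HiZXG=0} is regular.

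First I would define the four arrows explicitly. Since $X$ is integral and $U$ is dense, both $\G_m(X)$ and $\G_m(U)$ embed into $K(X)^\times$, so the restriction map $\G_m(X) \hookrightarrow \G_m(U)$ is injective. The map $\G_m(U) \to \bigoplus_{D \in (X \setminus U)^{(1)}} \Z[D]$ sends a unit $f \in K(X)^\times$ to $\sum_D \ord_D(f)[D]$; the sum is finite because $f$ has only finitely many zeros and poles in the Noetherian scheme $X$. The map $\bigoplus_D \Z[D] \to \Cl(X)$ is the inclusion into $\Divv(X)$ followed by passage to divisor classes, and $\Cl(X) \to \Cl(U)$ is induced by $D \mapsto D \cap U$ on $\Divv$.

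Next I would verify exactness at each of the four spots. Exactness at $\G_m(U)$ is algebraic Hartogs on a normal Noetherian scheme, $\Gamma(X, \Ocal_X) = \bigcap_{x \in X^{(1)}} \Ocal_{X,x}$ inside $K(X)$: $f \in K(X)^\times$ lies in $\G_m(X)$ iff both $f$ and $f^{-1}$ are regular at every codimension-one point, and for $f \in \G_m(U)$ the only condition beyond this definition is that $\ord_D(f) = 0$ for all $D \in (X \setminus U)^{(1)}$. Exactness at $\bigoplus_D \Z[D]$ is tautological: an element of that group equals $\divv_X(f)$ for some $f \in K(X)^\times$ exactly when the zeros and poles of $f$ all lie in $X \setminus U$, i.e.\ when $f \in \G_m(U)$. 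For exactness at $\Cl(X)$, if a class $[E]$ restricts to zero in $\Cl(U)$, then $E|_U = \divv_U(g)$ for some $g \in K(X)^\times$, and $E - \divv_X(g) \in \Divv(X)$ is supported on $X \setminus U$ and represents the same class as $E$. Surjectivity of $\Cl(X) \to \Cl(U)$ follows because the Zariski closure of a prime Weil divisor on $U$ is an irreducible codimension-one subscheme of $X$ whose restriction to $U$ is the original divisor, providing a section of $\Divv(X) \twoheadrightarrow \Divv(U)$ compatible with passage to class groups.

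I do not expect a serious obstacle: the argument is a standard manipulation of Weil divisors once the implicit normality hypothesis is made explicit. The only mildly nontrivial ingredient is algebraic Hartogs in the exactness at $\G_m(U)$, and this is routine.
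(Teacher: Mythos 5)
Your argument is correct and is essentially the paper's own: the paper packages your spot-by-spot verification as a diagram chase on the two standard four-term sequences $1 \to \Ocal_X(X)^\times \to K(X)^\times \to \Divv(X) \to \Cl(X) \to 0$ and its analogue for $U$, using $K(X)^\times = K(U)^\times$ and the surjection $\Divv(X) \twoheadrightarrow \Divv(U)$. Your caveat that normality is implicitly required (it is what makes algebraic Hartogs, i.e.\ exactness at $\G_m(U)$, and indeed the exactness of those rows, work) is a fair observation about the stated hypotheses, but harmless here since the lemma is only applied, via \cref{cor:GmUndPicForOpenSubscheme}, to regular schemes.
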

\begin{proof}
	The assumptions imply that there is a commutative diagram with exact rows
	\[\begin{tikzcd}
		1 \arrow[r] &\Ocal_X(X)^\times \arrow[d]\arrow[r] & K(X)^\times \arrow[d, equal]\arrow[r] & \Divv(X) \arrow[d, twoheadrightarrow]\arrow[r] & \Cl(X) \arrow[d]\arrow[r] & 0\\
		1 \arrow[r] &\Ocal_X(U)^\times \arrow[r] & K(U)^\times \arrow[r] & \Divv(U) \arrow[r] & \Cl(U) \arrow[r] & 0.
	\end{tikzcd}\]
	A diagram chase yields the result.
\end{proof}

\begin{corollary} \label[corollary]{cor:GmUndPicForOpenSubscheme}
	Let $X$ be a Noetherian integral regular scheme and let $U \subseteq X$ be dense open.  Then there is an exact sequence
	\[
	1 \to \G_m(X) \to \G_m(U) \to \bigoplus_{D \in (X \setminus U)^{(1)}}\Z[D] \to \Pic(X) \to \Pic(U) \to 0.
	\]
\end{corollary}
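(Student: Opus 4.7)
The plan is to deduce this corollary from the preceding lemma by replacing $\Cl$ with $\Pic$ in a functorial manner, exploiting the regularity hypothesis on $X$.

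First, I would observe that $U$ inherits from $X$ the full hypotheses \emph{Noetherian, integral, regular}: Noetherianness and integrality are preserved under passage to an open subscheme, and regularity is a local property on the scheme. In particular the preceding lemma applies unchanged to the pair $(X,U)$ and yields the exact sequence
\[
1 \to \G_m(X) \to \G_m(U) \to \bigoplus_{D \in (X \setminus U)^{(1)}} \Z[D] \to \Cl(X) \to \Cl(U) \to 0.
\]

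Next, I would invoke the standard comparison $\Cl(S) \isoto \Pic(S)$, $D \mapsto \Ocal_S(D)$, valid for any locally factorial Noetherian integral scheme $S$ (\cite[Proposition~II.6.16]{HartshorneAG}). Regular local rings are factorial by Auslander--Buchsbaum, so every regular scheme is locally factorial; hence the comparison applies to both $X$ and $U$. The map is natural in $S$: for the open immersion $j \colon U \hookrightarrow X$ one has $j^*\Ocal_X(D) = \Ocal_U(D \cap U)$, and the divisor of a section restricts as expected. Consequently the diagram
\[
\begin{tikzcd}
\bigoplus_D \Z[D] \arrow[r]\arrow[d, equal] & \Cl(X) \arrow[r]\arrow[d, "\wr"] & \Cl(U) \arrow[d, "\wr"] \arrow[r] & 0 \\
\bigoplus_D \Z[D] \arrow[r] & \Pic(X) \arrow[r] & \Pic(U) \arrow[r] & 0
\end{tikzcd}
\]
commutes, and substituting $\Pic$ for $\Cl$ in the sequence above preserves exactness, yielding the claim.

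There is really no obstacle to overcome: the corollary is a formal consequence of the preceding lemma together with the coincidence of Weil and Cartier divisor theory on regular schemes. The only point that needs verification is the functoriality of $D \mapsto \Ocal_S(D)$ with respect to restriction to a dense open subscheme, which is immediate from the definition and the fact that the prime Weil divisors on $U$ are exactly those in $X^{(1)}$ meeting $U$.
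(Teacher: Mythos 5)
Your proposal is correct and matches the paper's own argument: apply the preceding lemma and then identify $\Cl$ with $\Pic$ for $X$ and $U$, which are locally factorial because they are regular. The extra functoriality check you include is sound but not an essential addition; the paper's proof is exactly this substitution.
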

\begin{proof}
	By the assumptions, $\Cl(X) = \Pic(X)$ and $\Cl(U) = \Pic(U)$.
\end{proof}

\begin{corollary} \label[corollary]{cor:H1fppfmupFinite}
	Let $X/\F_q$ be an integral Noetherian regular proper variety and let $j: U \hookrightarrow X$ be the inclusion of an open subscheme of $X$.  Then $\H^1_\fppf(U,\mu_{p^n})$ is finite for all $n$ and any prime $p$.
\end{corollary}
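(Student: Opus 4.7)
The plan is to reduce the finiteness of $\H^1_\fppf(U,\mu_{p^n})$ to elementary finiteness statements for $\G_m(U)$ and $\Pic(U)$ via the Kummer sequence. First, I apply the Kummer sequence $0 \to \mu_{p^n} \to \G_m \xrightarrow{\cdot p^n} \G_m \to 0$ on $U_\fppf$, which is exact by~\cref{lemma:pnKummer}, and identify $\H^1_\fppf(U,\G_m) = \Pic(U)$ using~\cref{lemma:VergleichHifppfHiSYNHietSmoothGroupScheme} (since $\G_m$ is smooth) combined with the standard description of $\H^1_\et(-,\G_m)$ as the Picard group. The associated long exact sequence yields
$$0 \to \G_m(U)/p^n \to \H^1_\fppf(U,\mu_{p^n}) \to \Pic(U)[p^n] \to 0,$$
so it suffices to prove that both $\G_m(U)/p^n$ and $\Pic(U)[p^n]$ are finite.

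For the first, I feed the open immersion $U \hookrightarrow X$ into~\cref{cor:GmUndPicForOpenSubscheme}. Since $X$ is Noetherian, the direct sum $\bigoplus_{D \in (X \setminus U)^{(1)}} \Z[D]$ is finitely generated and free. Since $X$ is integral and proper over $\F_q$, the ring $\Gamma(X,\Ocal_X)$ is a finite field extension of $\F_q$, and so $\G_m(X) = \Gamma(X,\Ocal_X)^\times$ is finite. The exact sequence of~\cref{cor:GmUndPicForOpenSubscheme} then realizes $\G_m(U)$ as an extension of a subgroup of a finitely generated free abelian group by a finite group, hence as a finitely generated abelian group; its $p^n$-quotient is therefore finite.

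For the second piece, the same corollary gives a surjection $\Pic(X) \twoheadrightarrow \Pic(U)$ whose kernel is a quotient of the finitely generated free group $\bigoplus_D \Z[D]$. A snake lemma argument applied to multiplication by $p^n$ reduces finiteness of $\Pic(U)[p^n]$ to finiteness of $\Pic(X)[p^n]$, using the trivial finiteness of $(\bigoplus_D \Z)/p^n$. To finish, I would argue that $\Pic(X)$ itself is finitely generated: the Leray spectral sequence for $X \to \Spec \F_q$ together with Hilbert 90 and $\Br(\F_q) = 0$ gives $\Pic(X) = \Pic_{X/\F_q}(\F_q)$; the identity component $\Pic^0_{X/\F_q}$ is a proper group scheme of finite type over $\F_q$ (so has finitely many $\F_q$-points), while the Néron--Severi quotient is finitely generated by the theorem of the base. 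The main obstacle is justifying this last step in the generality of a regular proper integral variety rather than the classical smooth projective setting; this rests on Artin representability of the Picard functor and the standard structure results on $\Pic^0$.
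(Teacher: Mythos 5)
Your proof is correct and follows essentially the same route as the paper: the Kummer sequence on $U_\fppf$, the identification $\H^1_\fppf(U,\G_m)=\Pic(U)$, then \cref{cor:GmUndPicForOpenSubscheme} together with finiteness of $\G_m(X)$ and finite generation of $\Pic(X)$ via $0 \to \Pic^0(X) \to \Pic(X) \to \NS(X) \to 0$. The final step you flag is settled in the paper exactly as you propose --- $\NS(X)$ is finitely generated by the theorem of the base (SGA6, Exp.~XIII, \S 5) and $\Pic^0(X)$ is finite as the rational points of an abelian variety over a finite field --- and note that for this finiteness one only needs $\Pic^0_{X/\F_q}$ to be a group scheme of finite type over $\F_q$ (any finite-type scheme over a finite field has finitely many rational points), so properness of $\Pic^0$ is not actually required.
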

\begin{proof}
	The Kummer sequence~\cref{lemma:pnKummer} on $U_\fppf$ together with \[\Pic(U) = \H^1_\fppf(U,\G_{m,U})\] by~\cref{lemma:VergleichHifppfHiSYNHietSmoothGroupScheme} yields the exact sequence
	\[
	1 \to \G_m(U)/p^n \to \H^1_\fppf(U,\mu_{p^n}) \to \Pic(U)[p^n] \to 0.
	\]
	Since $\G_m(X) = \Gamma(X,\G_m)^\times$ is finite by the coherence theorem~\cite[Thm. (3.2.1)]{EGAIII1}, since $X/\F_q$ is proper and $\F_q$ is finite, and since $\Pic(X)$ is finitely generated since its sits in a short exact sequence $0 \to \Pic^0(X) \to \Pic(X) \to \NS(X) \to 0$ and $\Pic^0(X)$ is finite since it is the group of rational points of an Abelian variety over a finite field and $\NS(X)$ is always finitely generated by~\cite[Exp.~XIII, §\,5]{SGA6}, by~\cref{cor:GmUndPicForOpenSubscheme} and the finiteness of $(X \setminus U)^{(1)}$, this exact sequence gives the finiteness of $\G_m(U)/p^n$ and of $\Pic(U)[p^n]$.
\end{proof}

The following statements and proofs in this section are an extended version of the sketch of~\cref{thm:H1fppfofFiniteFlatGroupSchemeOverFiniteFieldIsFinite} given by \enquote*{darx} in~\cite{257441}.

\begin{lemma} \label[lemma]{lemma:TorsorOverGenericPointTrivial}
	Let $X$ be a normal integral scheme and $G/X$ be a finite flat group scheme.  If $T$ is a $G$-torsor on $X$ trivial over the generic point of $X$, then $T$ is trivial.  Hence, $\H^1_\fppf(X,G) \to \H^1_\fppf(K(X),G)$ is injective, and if $f: Y \to X$ is birational, $f^*: \H^1_\fppf(X,G) \to \H^1_\fppf(Y,G)$ is injective.
\end{lemma}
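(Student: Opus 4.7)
The plan is to reduce triviality of $T$ to the existence of a section $X \to T$, and then to extend the given generic section using normality of $X$. Since $G/X$ is finite flat (and affine), and a $G$-torsor is fppf-locally isomorphic to $G$, fppf descent forces the structure morphism $T \to X$ to be finite flat and affine. Moreover, $T$ is trivial as a $G$-torsor precisely when $T(X) \neq \emptyset$, since any section combined with the $G$-action yields an isomorphism $G \isoto T$ of torsors. Thus it suffices to extend the trivializing section $\Spec K(X) \to T$ over the generic point $\eta$ to a section defined on all of $X$.

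The key step is a scheme-theoretic closure argument. I would let $t \in T$ be the image of $\eta$ under the given generic section, and let $Z \subseteq T$ be the closure of $\{t\}$ endowed with its reduced induced structure. Then $Z$ is integral, closed in $T$, and the composite $Z \to X$ is finite (being a closed subscheme of the finite $X$-scheme $T$) and birational (its generic fiber is just $\Spec K(X)$ mapping isomorphically to $\eta$).

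Next I would invoke the standard fact that a finite birational morphism from an integral scheme onto a normal integral scheme is an isomorphism: writing $Z \to X$ as $\Spec_X \Bcal \to X$ for a coherent $\Ocal_X$-algebra $\Bcal$, normality forces $\Ocal_X$ to be integrally closed in $K(X)$, so the inclusions $\Ocal_X \hookrightarrow \Bcal \subseteq K(X)$ must be equalities. The inverse $X \isoto Z$ composed with $Z \hookrightarrow T$ then gives the required section, proving $T$ is trivial.

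The two injectivity assertions follow formally. Any class in the kernel of $\H^1_\fppf(X,G) \to \H^1_\fppf(K(X),G)$ is represented by a torsor trivial over $\eta$, hence globally trivial by the first part. If $f: Y \to X$ is birational and $T$ becomes trivial after pulling back to $Y$, restricting the resulting section over $Y$ to the generic point $\eta_Y$ produces a $K(Y) = K(X)$-point of $T$, and we conclude as before. I expect no genuine obstacle: the entire argument rests on the interpretation of torsors as pointed schemes, the finiteness and flatness of $T \to X$ (which propagates from $G$ by fppf descent), and the classical normality-based criterion for a finite birational morphism to be an isomorphism.
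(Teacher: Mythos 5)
Your argument is correct and follows essentially the same route as the paper: extend the generic trivializing section by taking its schematic closure in $T$, observe that this closure is finite and birational over $X$, and use normality to conclude it is an isomorphism, hence yields a global section and triviality; the injectivity claims then follow formally. The only cosmetic difference is that you spell out the integral-closure argument for \enquote{finite birational onto normal implies isomorphism}, where the paper simply cites the standard reference.
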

\begin{proof}
	Since $T$ is trivial over the generic point of $X$, generically, there is a section of $\pi: T \to X$.  This extends to a rational map $\sigma: X \dashrightarrow T$.  Take the schematic closure $i: X' \hookrightarrow T$ of $\sigma$.  The composition $\pi \circ i: X' \to T \to X$ is birational and finite (as a composition of a closed immersion and a finite morphism).  By~\cite[Corollary~12.88]{Goertz-Wedhorn}, since $X$ is normal, $X' \to X$ is an isomorphism.  Hence $\sigma$ is a section of $\pi$, so $T/X$ is trivial.
\end{proof}

\begin{lemma} \label[lemma]{lemma:FiniteNumberOfSectionsOfFiniteFlatScheme}
	Let $X$ be a proper variety over a finite field and $Y/X$ be a finite flat scheme.  Let $Z/X$ be proper.  Then $Y(Z)$ is finite.
\end{lemma}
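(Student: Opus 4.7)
The plan is to identify $Y(Z)=\Hom_X(Z,Y)$ with the set of sections of the base change $p\colon Y\times_X Z\to Z$, and then deduce finiteness from the coherence theorem for proper morphisms over the finite field $k$.

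Since $Y\to X$ is finite (and flat), the morphism $p$ is finite and flat, hence affine. Thus we can write $Y\times_X Z=\Spec_Z(\Bcal)$, where $\Bcal:=p_*\Ocal_{Y\times_X Z}$ is a coherent, locally free sheaf of $\Ocal_Z$-algebras. By the universal property of $\Spec_Z$, sections of $p$ (equivalently, elements of $Y(Z)$) are in bijection with $\Ocal_Z$-algebra homomorphisms $\Bcal\to\Ocal_Z$, and this set embeds into the set of $\Ocal_Z$-module homomorphisms
\[
\Hom_{\Ocal_Z}(\Bcal,\Ocal_Z)=\Gamma(Z,\Bcal^\vee).
\]

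The composition $Z\to X\to \Spec k$ is proper because both factors are proper, and $\Bcal^\vee$ is coherent on $Z$ since $\Bcal$ is. Therefore, by the coherence theorem of Grauert/EGA~\cite[Thm.~(3.2.1)]{EGAIII1} — already invoked in the proof of~\cref{cor:H1fppfmupFinite} — the $k$-vector space $\Gamma(Z,\Bcal^\vee)=\H^0(Z,\Bcal^\vee)$ is finite-dimensional. Since $k$ is a finite field, a finite-dimensional $k$-vector space is a finite set, so $Y(Z)$ is finite as a subset of a finite set.

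There is no serious obstacle: the argument is a direct combination of the identification of sections of an affine morphism with algebra homomorphisms of its structure sheaf, and the finiteness of coherent cohomology on a proper variety over a finite field; no flatness or smoothness of $Z$ or $Y$ is needed beyond what is already assumed.
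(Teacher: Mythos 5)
Your argument is correct and follows essentially the same route as the paper: reduce to sections of the finite (affine) morphism $Y\times_X Z\to Z$, identify them with $\Ocal_Z$-algebra maps from the (coherent) pushforward of the structure sheaf to $\Ocal_Z$, embed these into the global sections of the coherent dual sheaf, and conclude finiteness from the coherence theorem over the finite field. The only cosmetic difference is that the paper phrases the reduction as ``assume $Z=X$'' while you work directly over $Z$; the substance is identical.
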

\begin{proof}
	Since $\Mor_X(Z,Y) = \Mor_Z(Z, Y \times_X Z)$, one can assume $Z = X$.  So we have to show that there are only finitely many sections to $\pi: Y \to X$.  Such a section corresponds to an $\Ocal_X$-algebra map $\pi_*\Ocal_Y \to \Ocal_X$.  But $\H^0_\Zar(X, \HHom_X(\pi_*\Ocal_Y, \Ocal_X))$ is finite by the coherence theorem~\cite[Thm.~(3.2.1)]{EGAIII1} as it is a finite dimensional vector space over a finite field.
\end{proof}

\begin{lemma} \label[lemma]{lemma:KernelFinite}
	Let $Y \to X$ be an alteration of proper integral varieties with $X$ normal, and $G/X$ be a finite flat commutative group scheme.  Then $\ker(\H^1_\fppf(X,G) \to \H^1_\fppf(Y,G))$ is finite.  Hence $\H^1_\fppf(X,G)$ is finite if $\H^1_\fppf(Y,G)$ is.
\end{lemma}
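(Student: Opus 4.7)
The plan is to inject the kernel into a finite Čech-type cohomology group, after which finiteness of $\H^1_\fppf(Y,G)$ will force finiteness of $\H^1_\fppf(X,G)$ via the short exact sequence $0 \to \ker \to \H^1_\fppf(X,G) \to \H^1_\fppf(Y,G)$. For each $[T] \in \ker$, a trivialization $\sigma \colon Y \to T$ of the trivial torsor $T \times_X Y$ determines, via the $G$-torsor structure on $T$, a section $g(\sigma) \in G(Y \times_X Y)$ characterized by $g(\sigma) \cdot (\sigma \circ p_1) = \sigma \circ p_2$. Routine manipulations show $g(\sigma)$ is a Čech $1$-cocycle on $Y \times_X Y \times_X Y$, that replacing $\sigma$ by $h \cdot \sigma$ with $h \in G(Y)$ alters $g(\sigma)$ by the coboundary $\delta h = (p_2^* h)(p_1^* h)^{-1}$, and that the ambiguity coming from a different representative of $[T]$ (namely, an automorphism of $T$ by an element of $G(X)$) produces only trivial coboundaries, since $f \circ p_1 = f \circ p_2$ on $Y \times_X Y$. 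This gives a well-defined map
\[
\Phi \colon \ker\bigl(\H^1_\fppf(X,G) \to \H^1_\fppf(Y,G)\bigr) \longrightarrow \check{\H}^1(Y/X, G),
\]
where the right-hand side is the Čech subquotient $\ker d^1/\im d^0$ sitting inside $G(Y \times_X Y)$.

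The target of $\Phi$ is finite: since $Y \to X$ is proper, so is $Y \times_X Y \to X$, and as $X$ is proper over a finite field, $Y \times_X Y$ is itself proper. Applying~\cref{lemma:FiniteNumberOfSectionsOfFiniteFlatScheme} with the finite flat group scheme $G/X$ and the proper $X$-scheme $Y \times_X Y$, the group $G(Y \times_X Y)$ is finite, and hence so is its subquotient $\check{\H}^1(Y/X, G)$.

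To establish injectivity of $\Phi$, suppose $[g(\sigma)] = 0$. Then $g(\sigma) = \delta h$ for some $h \in G(Y)$, so replacing $\sigma$ by $h^{-1} \cdot \sigma$ we may assume $g(\sigma) = 1$, i.e.\ $\sigma \circ p_1 = \sigma \circ p_2$ on $Y \times_X Y$. Restricting to the generic point $\eta = \Spec K(X)$: since $Y$ is integral and generically finite of degree $d$ over $X$, $Y_\eta = \Spec K(Y)$ with $K(Y)/K(X)$ a finite field extension, so $Y_\eta \to \eta$ is fppf. The inherited equalizer condition for $\sigma_\eta$ on $Y_\eta \times_\eta Y_\eta = (Y \times_X Y)_\eta$ then descends under fpqc descent of morphisms to a section $\tau \colon \eta \to T_\eta$, so $T_\eta$ is a trivial $G_\eta$-torsor. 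By~\cref{lemma:TorsorOverGenericPointTrivial}, the normality of $X$ then forces $T$ itself to be trivial, so $[T] = 0$.

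Hence $\Phi$ embeds $\ker$ into a finite group, making $\ker$ finite, and the stated implication for $\H^1_\fppf(X,G)$ follows. The main technical hurdle is the injectivity step: $Y \to X$ is generically finite and proper but not fppf, so faithfully flat descent of morphisms along $Y \to X$ is unavailable globally. This is bypassed by descending only at the automatically fppf generic fiber $Y_\eta \to \eta$, and then transporting the resulting generic triviality to global triviality via the injectivity of $\H^1_\fppf(X,G) \to \H^1_\fppf(K(X), G)$ afforded by the normality of $X$ through~\cref{lemma:TorsorOverGenericPointTrivial}.
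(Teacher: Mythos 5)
Your proof is correct, but it takes a genuinely different route from the paper's. The paper first reduces to a finite flat covering: it invokes Raynaud--Gruson flattening by blow-up (plus a normalization), disposes of the blow-up case with \cref{lemma:TorsorOverGenericPointTrivial} since blow-ups are birational, and then, with $Y \to X$ finite flat, uses honest fppf descent to identify the kernel with $\check{H}^1(\{Y \to X\},G)$, a subquotient of the finite group $G(Y \times_X Y)$ furnished by \cref{lemma:FiniteNumberOfSectionsOfFiniteFlatScheme}. You skip the flattening entirely: the \v{C}ech cocycle $g(\sigma) \in G(Y \times_X Y)$ attached to a trivialization over $Y$ makes sense even though the covering $Y \to X$ is not flat, the target is finite by the same \cref{lemma:FiniteNumberOfSectionsOfFiniteFlatScheme} (using only that $Y \times_X Y$ is proper over $X$), and the injectivity that the paper extracts from descent theory you prove directly: when the cocycle is a coboundary you normalize $\sigma$ so that its two pullbacks to $Y \times_X Y$ agree, descend the section along the finite flat covering $Y_\eta \to \eta$ of the generic point (generic finiteness of the alteration is exactly what makes this fppf, and indeed $Y_\eta = \Spec K(Y)$ since $Y$ is integral), and then pass from generic to global triviality via \cref{lemma:TorsorOverGenericPointTrivial}, which is where the normality of $X$ enters --- the same place it enters in the paper. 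Your route buys the elimination of the Raynaud--Gruson/blow-up d\'evissage and of the reduction diagram; the paper's route buys that the identification of the kernel with a \v{C}ech group is literally standard descent rather than an ad hoc argument. One small point you should make explicit: your injectivity step only checks that the kernel of $\Phi$ is trivial, which suffices because $\Phi$ is a homomorphism of abelian groups (for commutative $G$ the cocycle of a contracted product $T_1 \wedge^G T_2$ is the product of the cocycles); alternatively, run your generic-fibre argument on the difference of two torsors with equal image.
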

\begin{proof}
	If $Y \to X$ is a blow-up, the kernel is trivial by~\cref{lemma:TorsorOverGenericPointTrivial} since a blow-up is birational.  Hence the statement holds for blow-ups.
	
	By~\cite[Théorème~5.2.2]{RaynaudGruson}, there is a blow-up $f: X' \to X$ such that $Y' := Y \times_X X'$ is flat over $X'$.  Since a normalization morphism of integral schemes is birational~\cite[Proposition~4.1.22]{Liu2006}, one can assume $X'$ normal.  There is a commutative diagram
	\[\begin{tikzcd}
		0 \ar[r] & \ker(\H^1_\fppf(X,G) \to \H^1_\fppf(Y,G)) \ar[r] \ar[d] & \H^1_\fppf(X,G) \ar[d,"f^*"] \\
		0 \ar[r] & \ker(\H^1_\fppf(X',G) \to \H^1_\fppf(Y',G)) \ar[r] & \H^1_\fppf(X',G)
	\end{tikzcd}
	\]
	By the snake lemma, since $\ker{f^*}$ is finite as $f$ is a blow-up, \[\ker(\H^1_\fppf(X,G) \to \H^1_\fppf(Y,G))\] is finite if we can show that \[\ker(\H^1_\fppf(X',G) \to \H^1_\fppf(Y',G))\] is finite.  Hence, we can assume $Y \to X$ finite flat.
	
	Let $T \to X$ be in the kernel, i.\,e., it is a $G$-torsor on $X$ trivial when pulled back to $Y$.  Choose a section $\sigma: Y \to T \times_X Y$; there are only finitely many of them by~\cref{lemma:FiniteNumberOfSectionsOfFiniteFlatScheme}. Two such sections differ by an element of $G(Y)$.  Since the base change $T \times_X (Y \times_X Y) \to Y \times_X Y$ is a $G$-torsor, one can take the $1$-cocycle
	\[
	\tau := d^0(\sigma) = \pr_0^*(\sigma) - \pr_1^*(\sigma) \in G(Y \times_X Y).
	\]
	The section $\tau$ corresponds to the isomorphism class of the $G$-torsor $T$ by the descent theory
	for the fppf covering $\{Y \to X\}$: As $\H^1_\fppf(-,G)$ can be computed by \v{C}ech cohomology and as the class of $T$ in $\H^1_\fppf(X,G) = \check{H}^1_\fppf(X,G) = \colim_\Ucal\check{H}^1_\fppf(\Ucal,G)$ (the colimit taken over the coverings of $X$; the natural morphism from the first \v{C}ech cohomology to the first derived functor cohomology is always an isomorphism) is trivialized by the covering $\{Y \to X\}$, it can be represented as the $1$-cocycle $\tau = d^0(\sigma)$, which is a $1$-coboundary:
	\[
	\check{H}^1(\{Y \to X\},G) = \frac{\ker\big(G(Y \times_X Y) \stackrel{d^1}{\to} G(Y \times_X Y \times_X Y)\big)}{\im\big(G(Y) \stackrel{d^0}{\hookrightarrow} G(Y \times_X Y)\big)}
	\]
	But by~\cref{lemma:FiniteNumberOfSectionsOfFiniteFlatScheme}, $G(Y \times_X Y)$ is finite.
\end{proof}

\begin{lemma} \label[lemma]{lemma:SchemeTheoreticClosureOfFiniteFlatGroupScheme}
	Let $X$ be an integral scheme with function field $K$ and $G/X$ be a finite flat group scheme.  Let $H_K \hookrightarrow G_K$ be a finite flat group scheme.  Then there is a blow-up $\tilde{X}/X$ such that $H_K$ extends to a finite flat subgroup scheme of $G \times_X \tilde{X}$.
\end{lemma}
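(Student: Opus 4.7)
The plan is to form the obvious closed extension of $H_K$ inside $G$, use Raynaud-Gruson flattening to repair the failure of flatness, and then verify that the resulting closed subscheme inherits a subgroup structure.

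First, let $\bar H$ denote the scheme-theoretic closure of $H_K$ in $G$ (via $H_K \hookrightarrow G_K \hookrightarrow G$). This is a closed subscheme of $G$ with generic fiber $H_K$, and it is finite over $X$ since $G$ is. It is flat over $X$ at the generic point, but typically not over all of $X$. I would then invoke~\cite[Théorème~5.2.2]{RaynaudGruson} applied to $\bar H \to X$: this yields a blow-up $\pi : \tilde X \to X$, centered in a closed subscheme avoiding the generic point, such that the strict transform $\tilde H$ of $\bar H$ along $\pi$ is flat over $\tilde X$. Regarding $\tilde H$ as a closed subscheme of $G \times_X \tilde X$, it is finite over $\tilde X$ (being closed in the finite $\tilde X$-scheme $G \times_X \tilde X$), flat over $\tilde X$, and still has generic fiber $H_K$.

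It remains to verify that $\tilde H$ is a subgroup scheme of $G_{\tilde X} := G \times_X \tilde X$. Since $\tilde X$ is integral and $\tilde H$ is flat over $\tilde X$ with generic fiber $H_K$, the subscheme $\tilde H$ coincides with the scheme-theoretic closure of $H_K$ in $G_{\tilde X}$; similarly, $\tilde H \times_{\tilde X} \tilde H$ is the scheme-theoretic closure of $H_K \times_K H_K$ in $G_{\tilde X} \times_{\tilde X} G_{\tilde X}$. To see that the multiplication $m : G_{\tilde X} \times_{\tilde X} G_{\tilde X} \to G_{\tilde X}$ restricts to $\tilde H \times_{\tilde X} \tilde H \to \tilde H$, consider the preimage $m^{-1}(\tilde H)$: it is a closed subscheme of $\tilde H \times_{\tilde X} \tilde H$ containing the generic fiber $H_K \times_K H_K$ (because $H_K$ is a subgroup of $G_K$), and by the closure characterization above it must equal all of $\tilde H \times_{\tilde X} \tilde H$. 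The same argument shows that the unit section and inversion of $G_{\tilde X}$ factor through $\tilde H$, so $\tilde H$ carries the desired subgroup structure.

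The one point requiring care is the identification of the strict transform with the scheme-theoretic closure of the generic fiber; this hinges on the flattening blow-up being centered away from the generic point of $X$, which is automatic since $\bar H$ is already flat there. Everything else is a formal consequence of flatness over the integral base $\tilde X$.
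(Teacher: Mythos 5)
Your proof is correct and follows essentially the same route as the paper: take the schematic closure of $H_K$ in $G$, flatten it by a Raynaud--Gruson blow-up, and use that a finite flat scheme over an integral base is the closure of its generic fiber (and likewise for the fiber product) to restrict the multiplication, inverse and unit of $G_{\tilde X}$ to the closure. Your use of the strict transform is in fact slightly more careful than the paper's wording, and your preimage argument for the multiplication map is just a rephrasing of the paper's appeal to the universal property of the schematic closure.
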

\begin{proof}
	Let $H \hookrightarrow G$ be the schematic closure of $H_K \hookrightarrow G$.  The morphism $H \to G \to X$ is finite as a composition of a closed immersion and a finite morphism. By~\cite[Théorème~5.2.2]{RaynaudGruson}, there is a blow-up $X' \to X$ such that $H' := H \times_X X' \to X'$ is flat.  Then, $H'$ is the schematic closure of $H_K \hookrightarrow G' := G \times_X X'$.  So one can assume $H/X$ finite flat.
	
	Let $Y \to X$ be finite flat.  Since the morphism is affine, locally, one has the diagram
	\[\begin{tikzcd}
		A \ar[r,hookrightarrow]  &
		A \otimes_R \Quot(R)  \\
		R\ar[r,hookrightarrow] \ar{u} &
		\Quot(R).
		\ar{u}
	\end{tikzcd}
	\]
	Here, the upper horizontal arrow is injective by flatness of $R \to A$.  Hence $Y$ is the schematic closure of $Y_K$ in $Y$.
	
	By flatness, the schematic closure of $H_K \times_K H_K$ in $G \times_X G$ is $H \times_X H$.  By the universal property of the schematic closure~\cite[(10.8)]{Goertz-Wedhorn}, one has the factorization
	\[\begin{tikzcd}
		H_K \times_K H_K\ar[d,hookrightarrow] \ar[r,"\mu"] &
		H_K \ar[d,hookrightarrow]  \\
		H \times_X H\ar[d,hookrightarrow] \ar[dashed,"\mu"]{r} &
		H \ar[d,hookrightarrow]  \\
		G \times_X G \ar[r,"\mu"] &
		G,
	\end{tikzcd}
	\]
	for the multiplication $\mu$, and similar for the inverse and unit section.
\end{proof}

\begin{lemma} \label[lemma]{lemma:FiltrationBypOrderGroupSchemes}
	Let $X$ be a proper integral variety over a field and $G/X$ be a finite flat commutative group scheme.  After an alteration $X' \to X$, there exists a filtration of $G$ by finite flat group schemes with subquotients of prime order.
\end{lemma}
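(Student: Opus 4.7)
The plan is to build the filtration at the generic point, where $G$ reduces to a finite commutative group scheme over a field, and then spread it out over the whole base by schematic closure and Raynaud--Gruson flattening, using \cref{lemma:SchemeTheoreticClosureOfFiniteFlatGroupScheme}. Write $K = K(X)$ and let $\bar K$ denote an algebraic closure.

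First, I would build a filtration of $G_{\bar K}$ and descend it to a finite extension. By Noetherian induction on closed subgroup schemes, $G_{\bar K}$ admits a Jordan--H\"older filtration whose subquotients are simple finite commutative group schemes over $\bar K$. Over an algebraically closed field such simple group schemes are precisely $\mathbf{Z}/\ell$ ($\ell \neq \Char \bar K$), $\mathbf{Z}/p$, $\mu_p$, or $\alpha_p$ ($p = \Char \bar K$), each of prime order. Since only finitely many closed subgroup schemes appear in this series and each is finitely presented, the whole filtration is already defined over some finite extension $K'/K$. I take $X'$ to be the normalization of $X$ in $K'$; since $X$ is of finite type over a field (hence excellent), $X' \to X$ is finite, proper, surjective and generically finite, hence an alteration. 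Replacing $X$ by $X'$ and $G$ by $G \times_X X'$, I may assume the filtration $0 = H_{0,K} \subset H_{1,K} \subset \cdots \subset H_{n,K} = G_K$ is defined already over $K$.

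Second, I would extend the chain over a blow-up of $X$. Applying \cref{lemma:SchemeTheoreticClosureOfFiniteFlatGroupScheme} iteratively to each $H_{i,K} \hookrightarrow G_K$ and composing the resulting blow-ups yields a proper birational morphism $\tilde X \to X$ together with finite flat closed subgroup schemes $\tilde H_i \subset G \times_X \tilde X$ whose generic fibers recover the $H_{i,K}$. Functoriality of schematic closure preserves the inclusions $\tilde H_i \subset \tilde H_{i+1}$. Each fppf quotient $\tilde H_{i+1}/\tilde H_i$ is then a finite flat commutative group scheme over $\tilde X$, with generic fiber $H_{i+1,K}/H_{i,K}$ of prime order $\ell_i$. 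Since the order of a finite flat group scheme equals the locally constant rank of its structure sheaf and $\tilde X$ is connected, $\tilde H_{i+1}/\tilde H_i$ has order $\ell_i$ on all of $\tilde X$. The composition $\tilde X \to X' \to X$ is proper, surjective and generically finite, hence is the required alteration.

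The main obstacle is to guarantee simultaneous flatness of every member of the filtration, not merely of individual $\tilde H_i$ --- this is exactly the Raynaud--Gruson input encoded in \cref{lemma:SchemeTheoreticClosureOfFiniteFlatGroupScheme}, which I apply iteratively while invoking functoriality of schematic closure to keep the chain compatible across successive blow-ups. Once the filtration is flat, constancy of the order on the connected base $\tilde X$ forces each subquotient to have the prime order predicted by its generic fiber.
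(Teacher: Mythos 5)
Your proposal is correct and follows essentially the same route as the paper: construct the Jordan--H\"older filtration over the algebraic closure of $K(X)$ (with simple subquotients $\Z/\ell$, $\Z/p$, $\mu_p$, $\alpha_p$), descend it to a finite extension of the function field, pass to the normalization of $X$ in that extension, and spread the filtration out via schematic closure and Raynaud--Gruson flattening through \cref{lemma:SchemeTheoreticClosureOfFiniteFlatGroupScheme}. Your write-up is in fact more detailed than the paper's (iterating the lemma along the chain, checking the subquotients stay of constant prime order on the connected base), but the underlying argument is the same.
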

\begin{proof}
	Over the algebraic closure of the function field of $X$, there is such an filtration since the only simple objects in the category of finite flat group schemes of $p$-power order are $\mu_p$, $\Z/p$ and $\alpha_p$.  Since everything is of finite presentation, these are defined over a finite extension of the function field~\cite[Corollary~10.79]{Goertz-Wedhorn}.  Now take the normalization in this finite extension of function fields and use~\cref{lemma:SchemeTheoreticClosureOfFiniteFlatGroupScheme}.
\end{proof}

\begin{theorem} \label[theorem]{thm:H1fppfofFiniteFlatGroupSchemeOverFiniteFieldIsFinite}
	Let $X$ be a proper integral normal variety over a finite field and $G/X$ be a finite flat commutative group scheme.  Then $\H^1_\fppf(X,G)$ is finite.
\end{theorem}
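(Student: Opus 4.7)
The strategy is to reduce, in several stages, to the case where $G$ is one of the simple finite flat commutative group schemes $\Z/\ell$ (for a prime $\ell \neq p$), $\Z/p$, $\mu_p$, or $\alpha_p$, each of which can be handled directly.

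First, I combine de Jong's alteration theorem~\cite[Theorem~4.1]{deJongAlterations} with \cref{lemma:KernelFinite} to replace $X$ by a smooth projective alteration over $\F_q$. Then I apply \cref{lemma:FiltrationBypOrderGroupSchemes} and \cref{lemma:KernelFinite} again to further reduce to the case where $G$ itself has prime order $q$: by the long exact sequence $\H^1_\fppf(X,G') \to \H^1_\fppf(X,G) \to \H^1_\fppf(X,G'')$ attached to a filtration with prime-order subquotients and induction on the filtration length, finiteness for each subquotient implies finiteness for $G$.

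Next, I reduce to one of the four canonical group schemes. If $q = \ell \neq p$, then $G$ is étale, and passing to an alteration over which the associated étale cover splits (using \cref{lemma:KernelFinite} once more) gives $G = \Z/\ell$. If $q = p$, then $G_{\overline{K(X)}}$ is isomorphic to one of $\mu_p, \Z/p, \alpha_p$; spreading out such an isomorphism over a finite extension $K'/K(X)$, taking the normalization of $X$ in $K'$, combining \cref{lemma:SchemeTheoreticClosureOfFiniteFlatGroupScheme} with Raynaud--Gruson flattening~\cite[Théorème~5.2.2]{RaynaudGruson}, and invoking \cref{lemma:KernelFinite} once more, I reduce to the case $G \in \{\mu_p, \Z/p, \alpha_p\}$ globally. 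This propagation step---verifying that after further alterations the globally defined $p$-order group scheme actually coincides with one of the three canonical ones, rather than being a non-trivial deformation with the canonical one only as its generic fiber---is the principal technical obstacle.

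In each of the four resulting cases, finiteness follows from prior results. For $G = \Z/\ell$ and $G = \Z/p$, the group scheme is étale, so by \cref{lemma:VergleichHifppfHiSYNHietSmoothGroupScheme} one has $\H^1_\fppf(X,G) = \H^1_\et(X,G)$, which is finite since $X$ is proper over the finite field $\F_q$. For $G = \mu_p$, finiteness is exactly \cref{cor:H1fppfmupFinite}. For $G = \alpha_p$, the Frobenius exact sequence $0 \to \alpha_p \to \G_a \stackrel{F}{\to} \G_a \to 0$ on $X_\fppf$ yields a long exact sequence sandwiching $\H^1_\fppf(X, \alpha_p)$ between $\coker\bigl(F \colon \H^0(X, \Ocal_X) \to \H^0(X, \Ocal_X)\bigr)$ and $\ker\bigl(F \colon \H^1(X, \Ocal_X) \to \H^1(X, \Ocal_X)\bigr)$; both $\H^0(X, \Ocal_X)$ and $\H^1(X, \Ocal_X)$ are finite-dimensional $\F_q$-vector spaces by the coherence theorem~\cite[Theorem~(3.2.1)]{EGAIII1}, completing the argument.
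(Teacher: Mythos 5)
Your reduction to an alteration, the dévissage to prime order via \cref{lemma:FiltrationBypOrderGroupSchemes} and \cref{lemma:KernelFinite}, and your treatment of each of the four canonical group schemes (étale cohomology for $\Z/\ell$ and $\Z/p$, \cref{cor:H1fppfmupFinite} for $\mu_p$, the sequence $0 \to \alpha_p \to \G_a \xrightarrow{F} \G_a \to 0$ plus coherence for $\alpha_p$) are all sound. But the step you yourself flag as the ``principal technical obstacle'' is a genuine gap, and it cannot be closed in the form you propose: a finite flat group scheme of order $p$ over $X$ whose generic fiber is a form of $\mu_p$ (or $\alpha_p$, or $\Z/p$) need not become globally isomorphic to that canonical group scheme after \emph{any} alteration. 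The standard example is the kernel of Frobenius of a non-isotrivial family of elliptic curves: its fibers are forms of $\mu_p$ at ordinary points and $\alpha_p$ at supersingular points, and since an alteration is surjective, the pulled-back group scheme still has $\alpha_p$ fibers, so no base change of this kind can make it constant of one of the three types. The schematic-closure argument (\cref{lemma:SchemeTheoreticClosureOfFiniteFlatGroupScheme} plus Raynaud--Gruson) only produces a finite flat model extending the generic datum; it gives no control on the special fibers, which is exactly the point at issue. So as written the proposal proves finiteness only under the unjustified assumption that $G$ is globally one of $\mu_p$, $\Z/p$, $\alpha_p$.

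The paper circumvents this obstacle rather than solving it, by a different dichotomy after the reduction to prime order. Since $F \circ V = [p] = 0$ on a simple group scheme of order $p$, either $V = 0$ or $F = 0$. If $V = 0$, one does not identify $G$ at all: by de Jong's result~\cite[Proposition~2.2]{deJongDieudonne} there is a short exact sequence $0 \to G \to \Lcal \to \Mcal \to 0$ with $\Lcal,\Mcal$ vector bundles, and finiteness of $\H^1_\fppf(X,G)$ follows from coherent cohomology over a finite field; this absorbs the $\Z/p$- and $\alpha_p$-type cases, including all degenerating families with $V=0$. If $F = 0$ (and the group is not already covered by the previous case), one only needs $G \iso \mu_p$ \emph{over a dense open subscheme} $U$, which is achievable by spreading out after an alteration: taking a regular alteration $X'$, \cref{lemma:TorsorOverGenericPointTrivial} (normality) makes $\H^1_\fppf(X', G\times_X X')$ inject into $\H^1_\fppf$ over $f^{-1}(U)$, and \cref{cor:H1fppfmupFinite} is stated precisely for open subschemes of regular proper varieties so that it applies there; \cref{lemma:KernelFinite} then descends finiteness to $X$. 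If you want to salvage your outline, you should replace the global identification step by this kind of argument: either an embedding of $G$ into vector groups (unipotent case) or a purely generic identification with $\mu_p$ combined with the injectivity coming from normality (multiplicative case).
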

\begin{proof}
	By~\cref{lemma:FiltrationBypOrderGroupSchemes}, \cref{lemma:KernelFinite} and the long exact cohomology sequence one can assume $G$ of prime order $p$ (since the case of $G/X$ étale is easily dealt with).  Since then $G$ is simple by~\cite[ p.~38]{ShatzGroupSchemes} and since $F \circ V = [p] = 0$ by~\cite[p.~62]{ShatzGroupSchemes} and~\cite[p.~141]{MumfordAbelianVarieties}, either $V = 0$ or $F = 0$ on $G$.
	
	If $V = 0$, by~\cite[Proposition~2.2]{deJongDieudonne}, there is a short exact sequence
	\[
	0 \to G \to \Lcal \to \Mcal \to 0
	\]
	with vector bundles $\Lcal, \Mcal$.  By the coherence theorem~\cite[Thm.~(3.2.1)]{EGAIII1}, as $X$ is proper and lives over a finite ground field, and by comparison of Zariski and fppf cohomology~\cite[Proposition~III.3.7]{MilneEtaleCohomology}, the long exact cohomology sequence shows that $\H^i_\fppf(X,G)$ is finite.
	
	If $F = 0$, after replacing $X$ by an alteration by~\cref{lemma:KernelFinite} as in the proof of~\cref{lemma:FiltrationBypOrderGroupSchemes}, one can assume that $G$ is isomorphic to $\mu_p$ over the generic point.  Since for $Y,Z/X$ of finite presentation such that $Y_K \iso Z_K$, there is a non-empty open subscheme $U \hookrightarrow X$ such that $Y_U \iso Z_U$, there is a non-empty open subscheme $U \hookrightarrow X$ such that $G_U \iso \mu_{p,U}$.  By~\cite{deJongAlterations}, there is an alteration $f: X' \to X$ such that $X'$ is regular.  By~\cref{cor:H1fppfmupFinite}, $\H^1_\fppf(f^{-1}(U),\mu_p)$ is finite.  By~\cref{lemma:TorsorOverGenericPointTrivial}, $\H^1_\fppf(X', G \times_X X')$ is finite, so by~\cref{lemma:KernelFinite}, $\H^1_\fppf(X,G)$ is finite.
\end{proof}

\section{Isogeny invariance of finiteness of $\Sha$, the $p$-part}
In this section, we extend~\cite[p.~240, Theorem~4.31]{KellerSha} to $p^\infty$-torsion.

\begin{theorem} \label[theorem]{thm:IsogenyInvarianceOfFinitenessOfSha}
	Let $X/k$ be a proper variety over a finite field $k$ and $f: \Acal \to \Acal'$ be an isogeny of Abelian schemes over $X$.  Let $p$ be an arbitrary prime.  Assume $f$ étale if $p \neq \Char{k}$.  Then $\Sha(\Acal/X)[p^\infty]$ is finite if and only if $\Sha(\Acal'/X)[p^\infty]$ is finite.
\end{theorem}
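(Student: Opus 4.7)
The plan is to exploit the Kummer-type long exact cohomology sequence arising from the isogeny and invoke the finiteness result for $\H^1_\fppf$ of finite flat group schemes established in Section~3. First, I would set $G := \ker f$, a finite flat commutative group scheme over $X$ (étale when $p \neq \Char k$ by hypothesis). The sequence $0 \to G \to \Acal \xrightarrow{f} \Acal' \to 0$ is exact in the fppf topology by~\cref{lemma:pnKummer}. Since $\Acal$ and $\Acal'$ are smooth, \cref{lemma:VergleichHifppfHiSYNHietSmoothGroupScheme} identifies their fppf and étale cohomology, so the associated long exact sequence reads
\[
\cdots \to \H^1_\fppf(X, G) \to \Sha(\Acal/X) \xrightarrow{f_*} \Sha(\Acal'/X) \to \cdots,
\]
exhibiting $\ker(f_*)$ as a quotient of $\H^1_\fppf(X, G)$.

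Next I would apply~\cref{thm:H1fppfofFiniteFlatGroupSchemeOverFiniteFieldIsFinite} to conclude that $\H^1_\fppf(X, G)$ is finite, having first reduced to $X$ integral and normal via the normalization morphism (which is finite and birational, hence induces an injection on $\H^1_\fppf(-, G)$ by~\cref{lemma:TorsorOverGenericPointTrivial}) and passage to irreducible components. Hence $\ker(f_*)$ is finite. To handle the opposite direction I would then introduce a complementary isogeny $g: \Acal' \to \Acal$ with $g \circ f = [n]_\Acal$ and $f \circ g = [n]_{\Acal'}$, where $n = \deg f$; its existence follows from the inclusion $\ker f \subseteq \Acal[n]$. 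The short exact sequence $0 \to \ker f \to \Acal[n] \to \ker g \to 0$ shows that $\ker g$ is étale whenever $\Acal[n]$ is, which holds for $p \neq \Char k$ since then $\deg f$ is automatically prime to $\Char k$; thus $g$ satisfies the same hypotheses as $f$ and the above argument applies to show that $\ker(g_*: \Sha(\Acal'/X) \to \Sha(\Acal/X))$ is finite as well.

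Finally, the equivalence is immediate upon restricting to $p^\infty$-torsion: if $\Sha(\Acal'/X)[p^\infty]$ is finite, then $f_*$ restricted to $\Sha(\Acal/X)[p^\infty]$ has finite kernel and finite image, forcing $\Sha(\Acal/X)[p^\infty]$ to be finite, and the reverse implication follows symmetrically using $g_*$. The main obstacle is the finiteness of $\H^1_\fppf(X, G)$ for $G$ not necessarily étale — the case $p = \Char k$ — which is precisely the content of~\cref{thm:H1fppfofFiniteFlatGroupSchemeOverFiniteFieldIsFinite} from the previous section; once that input is in hand, the isogeny invariance for the $p$-part proceeds in direct parallel to the $\ell \neq p$ case treated in~\cite{KellerSha}.
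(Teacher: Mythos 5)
Your strategy coincides with the paper's proof in the direction that is genuinely new here: for $p=\Char{k}$, the fppf exact sequence of \cref{lemma:pnKummer} for $0\to\ker f\to\Acal\to\Acal'\to 0$, together with \cref{lemma:VergleichHifppfHiSYNHietSmoothGroupScheme} and the finiteness of $\H^1_\fppf(X,\ker f)$ from \cref{thm:H1fppfofFiniteFlatGroupSchemeOverFiniteFieldIsFinite}, shows that $\ker(f_*)$ is finite, and one concludes on $p$-primary parts exactly as you do. Where you genuinely diverge is the converse: you invoke the complementary isogeny $g\colon\Acal'\to\Acal$ with $g\circ f=[n]$ and $f\circ g=[n]$, $n=\deg f$ (legitimate: $\ker f$ is killed by its order, so $[n]$ factors through the fppf quotient $\Acal'=\Acal/\ker f$), and run the same kernel-finiteness argument for $g$. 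The paper instead passes to the dual isogeny $f^t\colon\Acal'^{t}\to\Acal^{t}$ and uses a polarization (\cite[Proposition~2.19]{KellerGoodReduction}) to transfer finiteness between $\Sha(\Acal/X)$ and $\Sha(\Acal^{t}/X)$. Your route is more economical: it needs no duality or polarization, only that $\ker g$ is finite flat, which is precisely the generality of \cref{thm:H1fppfofFiniteFlatGroupSchemeOverFiniteFieldIsFinite}; both arguments are valid.

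Two points need repair. First, your claim that for $p\neq\Char{k}$ the degree of an étale isogeny is automatically prime to $\Char{k}$ is false: for an ordinary abelian scheme in characteristic $q$, dividing by the étale part of $\Acal[q]$ is an étale isogeny of degree a power of $q$, so $\Acal[n]$ and hence $\ker g$ need not be étale, and $g$ need not satisfy the hypotheses imposed on $f$. This is harmless for the conclusion — either apply \cref{thm:H1fppfofFiniteFlatGroupSchemeOverFiniteFieldIsFinite} to the finite flat (not necessarily étale) $\ker g$, or simply quote \cite[Theorem~4.31]{KellerSha} for $p\neq\Char{k}$ as the paper does — but the justification as written is wrong. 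Second, your reduction to $X$ integral and normal via the normalization is not covered by \cref{lemma:TorsorOverGenericPointTrivial}: that lemma assumes the base is normal, and injectivity of $\H^1_\fppf(X,G)\to\H^1_\fppf(\widetilde{X},G)$ genuinely fails for non-normal $X$ (already for $G=\Z/n$ on a nodal projective curve the classes arising from the gluing at the node die on the normalization), so at best one can hope for a finite kernel, which requires a separate argument (e.g.\ descent along the conductor square). To be fair, the paper's own proof applies the finiteness theorem to $X$ without comment, so this point is implicit there as well; but your proposed fix does not establish it.
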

\begin{proof}
	In the case where $\ell$ is invertible on $X$ and $f$ is étale (i.\,e., of degree invertible on $X$), this is~\cite[p.~240, Theorem~4.31]{KellerSha}.
	
	Now assume $p = \Char{k}$.  Then the short exact sequence of flat sheaves \cref{lemma:pnKummer} yields an exact sequence in cohomology
	\[
	\H^1_\fppf(X,\ker(f)) \to \H^1_\fppf(X,\Acal) \stackrel{f}{\to} \H^1_\fppf(X,\Acal')
	\]
	and note that $\H^1_\fppf(X,\Acal) = \H^1_\et(X,\Acal) = \Sha(\Acal/X)$ by~\cref{lemma:VergleichHifppfHiSYNHietSmoothGroupScheme} since $\Acal/X$ is smooth, and that $\H^1_\fppf(X,\ker(f))$ is finite by~\cref{thm:H1fppfofFiniteFlatGroupSchemeOverFiniteFieldIsFinite}.  Note that all groups are torsion (the Tate-Shafarevich groups by~\cite[p.~224, Proposition~4.1]{KellerSha}), hence the sequence stays exact after taking $p^\infty$-torsion. So $\Sha(\Acal/X)[p^\infty]$ is finite if $\Sha(\Acal'/X)[p^\infty]$ is.
	
	For the converse, note that by~\cite[Proposition~2.19]{KellerGoodReduction}, there is a polarization $\lambda: \Acal^t \to \Acal$.   Hence, the argument above for $\lambda$ and $\lambda^t$ implies that $\Sha(\Acal^t/X)[p^\infty]$ is finite iff $\Sha(\Acal/X)[p^\infty]$ is, and analogously for $\Sha(\Acal'/X)[p^\infty]$.  Taking the dual Kummer sequence $0 \to \ker(f^t) \to \Acal'^t \to \Acal^t \to 0$ yields an exact sequence
	\[
	\H^1_\fppf(X,\ker(f^t)) \to \Sha(\Acal'^t/X) \to \Sha(\Acal^t/X).
	\]
	By the same argument as above, $\Sha(\Acal'^t/X)[p^\infty]$ is finite if $\Sha(\Acal^t/X)[p^\infty]$ is if $\Sha(\Acal/X)[p^\infty]$ is.  So $\Sha(\Acal'/X)[p^\infty]$ is finite.
\end{proof}

\section{Descent of finiteness of $\Sha$, the $p$-part}\label{sec:descent-of-finiteness}
In this section, we extend~\cite[p.~238, Theorem~4.29]{KellerSha} to $p^\infty$-torsion.

\begin{lemma} \label[lemma]{lemma:ShapinftyCofinitelyGenerated}
	Let $\Acal/X$ be an Abelian scheme over a proper variety $X$ over a finite field of characteristic $p$.  Then $\Sha(\Acal/X)[p^\infty]$ is cofinitely generated.
\end{lemma}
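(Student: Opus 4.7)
The plan is to bound $\Sha(\Acal/X)[p]$ by the flat cohomology of the finite flat group scheme $\Acal[p]$ and then apply the finiteness theorem of the previous section. An abelian group $A$ with $A = A[p^\infty]$ is cofinitely generated iff $A[p]$ is finite, so it suffices to prove $\Sha(\Acal/X)[p]$ is finite.

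First I would write down the multiplication-by-$p$ Kummer sequence on $X_\fppf$ from~\cref{lemma:pnKummer}:
\[
0 \to \Acal[p] \to \Acal \xrightarrow{p} \Acal \to 0,
\]
which yields a surjection
\[
\H^1_\fppf(X,\Acal[p]) \twoheadrightarrow \H^1_\fppf(X,\Acal)[p].
\]
Since $\Acal/X$ is smooth, \cref{lemma:VergleichHifppfHiSYNHietSmoothGroupScheme} identifies $\H^1_\fppf(X,\Acal) = \H^1_\et(X,\Acal) = \Sha(\Acal/X)$. So it is enough to show $\H^1_\fppf(X,\Acal[p])$ is finite.

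Now $\Acal[p]$ is a finite flat commutative group scheme over $X$ (of rank $p^{2\dim \Acal}$), so~\cref{thm:H1fppfofFiniteFlatGroupSchemeOverFiniteFieldIsFinite} applies directly and gives the desired finiteness, assuming $X$ is integral and normal. Since an abelian scheme is typically considered over a connected base we may assume $X$ is integral, and if normality is not built into the standing hypotheses on $X$ the mild obstacle is to reduce to the normal case: one pulls back along the normalization $\nu\colon \tilde X \to X$, which is a finite birational morphism of proper varieties over the finite ground field, and uses~\cref{lemma:KernelFinite} (applied to the alteration $\nu$) together with the finiteness of $\H^1_\fppf(\tilde X,\nu^*\Acal[p])$ from~\cref{thm:H1fppfofFiniteFlatGroupSchemeOverFiniteFieldIsFinite} to conclude that $\H^1_\fppf(X,\Acal[p])$ is finite. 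Combining this with the surjection above yields finiteness of $\Sha(\Acal/X)[p]$, and hence that $\Sha(\Acal/X)[p^\infty]$ is cofinitely generated.

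The entire argument is essentially a formal consequence of the main finiteness theorem of the previous section plus the Kummer sequence; the only place where something nontrivial could go wrong is the reduction to the normal integral case needed to invoke~\cref{thm:H1fppfofFiniteFlatGroupSchemeOverFiniteFieldIsFinite}, which is handled by the normalization argument above.
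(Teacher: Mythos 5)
Your argument is essentially the paper's own proof: Kummer sequence on $X_\fppf$, the surjection $\H^1_\fppf(X,\Acal[p])\twoheadrightarrow \H^1_\fppf(X,\Acal)[p]$, the smooth comparison $\H^1_\fppf(X,\Acal)=\H^1_\et(X,\Acal)=\Sha(\Acal/X)$, finiteness of $\H^1_\fppf(X,\Acal[p])$ via \cref{thm:H1fppfofFiniteFlatGroupSchemeOverFiniteFieldIsFinite}, and the observation that a $p$-primary group with finite $p$-torsion is cofinitely generated. So the core is correct and identical in structure.

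The one place you deviate is the attempted reduction to the normal integral case, and as written that step is not licensed by the results you cite: \cref{lemma:KernelFinite} assumes the \emph{base} $X$ of the alteration is normal (its proof rests on \cref{lemma:TorsorOverGenericPointTrivial}, i.e.\ injectivity of restriction to the generic point, which is exactly what can fail for non-normal $X$), so you cannot apply it to the normalization $\nu\colon \tilde X\to X$ of a non-normal $X$ to bound $\ker\bigl(\H^1_\fppf(X,\Acal[p])\to\H^1_\fppf(\tilde X,\nu^*\Acal[p])\bigr)$. This is a side issue rather than a flaw in the main line: the paper itself silently invokes \cref{thm:H1fppfofFiniteFlatGroupSchemeOverFiniteFieldIsFinite} here, i.e.\ it implicitly works with $X$ integral and normal (as in its other hypotheses), and under that reading your extra step is unnecessary. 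If you do want to treat genuinely non-normal $X$, you would need a separate argument (e.g.\ controlling the kernel of $\nu^*$ directly, say via a Čech/descent argument for the finite morphism $\nu$ after flattening), not an appeal to \cref{lemma:KernelFinite} as stated.
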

Recall that $\Sha(\Acal/X)$ was defined as $\H^1_\et(X,\Acal)$ in~\cite[p.~225, Definition~4.2]{KellerSha}.
\begin{proof}
	The long exact cohomology sequence associated to the Kummer sequence \cref{lemma:pnKummer} gives us a surjection
	\[
	\H^1_\fppf(X,\Acal[p^n]) \twoheadrightarrow \H^1_\fppf(X,\Acal)[p^n] \to 0
	\]
	Now, since $\Acal/X$ is a smooth group scheme, \cref{lemma:VergleichHifppfHiSYNHietSmoothGroupScheme} gives us an isomorphism $\H^1_\fppf(X,\Acal) = \H^1_\et(X,\Acal)$, which by definition equals $\Sha(\Acal/X)$.  By~\cref{thm:H1fppfofFiniteFlatGroupSchemeOverFiniteFieldIsFinite}, $\H^1_\fppf(X,\Acal[p^n])$ is finite since $X/\F_q$ is proper.  From this, one sees that $\H^1_\et(X,\Acal)[p]$ is finite.  Hence $\Sha(\Acal/X)[p^\infty]$ is cofinitely generated by~\cite[Lemma~2.38]{KellerGoodReduction}.
\end{proof}

\begin{lemma}[existence of trace morphism] \label[lemma]{lemma:SpurabbildungInFlacherKohomologie}
	Let $f: X' \to X$ be a finite étale morphism of constant degree $d$ and let $\Fcal$ be an fppf sheaf on $X$.  Then there is a trace map $\Tr_f: f_*f^*\Fcal \to \Fcal$, functorial in $\Fcal$, such that $\phi \mapsto \Tr_f \circ f_*(\phi)$ is an isomorphism $\Hom_{X'}(\Fcal',f^*\Fcal) \to \Hom_X(\pi_*\Fcal',\Fcal)$ for any fppf sheaf $\Fcal'$ on $X'$.  Thus, $f_* = f_!$, that is, $f_*$ is left adjoint to $f^*$, and $\Tr_f$ is the adjunction map.  The composites
	\[
	\Fcal \to f_*f^*\Fcal \stackrel{\Tr_f}{\to} \Fcal
	\]
	and 
	\[ \H^r_\fppf(X,\Fcal) \xrightarrow{f^*} \H^r_\fppf(X',f^*\Fcal) \xrightarrow{\operatorname{can}} \H^r_\fppf(X,f_*f^*\Fcal) \xrightarrow{\Tr_f} \H^r_\fppf(X,\Fcal)
	\]
	are multiplication by $d$.
\end{lemma}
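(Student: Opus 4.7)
The plan is to exploit that a finite étale morphism of constant degree $d$ becomes, étale-locally on $X$, the trivial cover $\coprod_{i=1}^d X \to X$; this reduces the construction of $\Tr_f$ and the verification of its properties to straightforward checks in the split case.

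First, choose an étale cover $\{U_j \to X\}_j$ with $X' \times_X U_j \cong \coprod_{i=1}^d U_j$. In this split situation, $(f_*f^*\Fcal)|_{U_j}$ canonically identifies with $\Fcal|_{U_j}^{\oplus d}$, and I define $\Tr_{f, U_j}$ as the sum map $\Fcal|_{U_j}^{\oplus d} \to \Fcal|_{U_j}$. Since the sum is symmetric in its arguments, it is invariant under the $S_d$-ambiguity of the trivialization, hence compatible with the descent datum for $\{U_j \to X\}_j$; by fppf descent the local maps glue to a global morphism $\Tr_f\colon f_*f^*\Fcal \to \Fcal$, functorial in $\Fcal$ by construction.

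To verify that $\phi \mapsto \Tr_f \circ f_*(\phi)$ realizes the adjunction $f_* \dashv f^*$, construct the candidate inverse $\psi \mapsto f^*\psi \circ \eta_{\Fcal'}$, where $\eta \colon \mathrm{id} \to f^*f_*$ is the unit of the usual adjunction $f^* \dashv f_*$, and check the two triangle identities for $(\eta, \Tr_f)$ viewed as unit and counit of the new adjunction. Both reduce, after passing to the splitting cover, to trivial identities for maps between $d$-fold products. The composite $\Fcal \to f_*f^*\Fcal \to \Fcal$ is then multiplication by $d$: étale-locally the first arrow becomes the diagonal $\Fcal \hookrightarrow \Fcal^{\oplus d}$ (the unit of $f^* \dashv f_*$) and the second is the sum.

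For the cohomological composite, observe that since $f$ is finite, $R^i f_* = 0$ for $i > 0$ on the fppf (and étale) site: on geometric points the splitting of $f$ identifies $(R^i f_*\Gcal)_{\bar{x}}$ with $\H^i_\fppf(X'_{\bar{x}}, \Gcal)$, which vanishes for $i > 0$ since $X'_{\bar{x}}$ is a finite disjoint union of strict Henselizations. The Leray spectral sequence therefore degenerates, making the canonical map $\H^r_\fppf(X', f^*\Fcal) \to \H^r_\fppf(X, f_*f^*\Fcal)$ an isomorphism; applying $\H^r_\fppf(X, -)$ to the sheaf-level composite then yields the stated cohomology claim. There is no serious obstacle; the main technical care lies in verifying the $S_d$-invariance of the sum map so that descent applies, and in the (standard) exactness of $f_*$ for finite $f$ on the fppf site.
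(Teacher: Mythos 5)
Your construction of $\Tr_f$ — trivialize $f$ étale-locally, take the sum map $\Fcal^{\oplus d}\to\Fcal$, observe its invariance under the permutation ambiguity, and glue using that $\Hom$ of sheaves is a sheaf — is in substance the paper's own argument, which packages the same descent step by passing to a finite Galois covering $X''\to X$ splitting $f$ and defining $\Tr_f$ as the sum over the cosets $G/H$; likewise your verification of the adjunction and of $\Fcal\to f_*f^*\Fcal\xrightarrow{\Tr_f}\Fcal$ being multiplication by $d$ reduces to the split case exactly as in the paper. One small inaccuracy: the map $\eta_{\Fcal'}\colon \Fcal'\to f^*f_*\Fcal'$ you invoke is \emph{not} the unit of the usual adjunction $f^*\dashv f_*$ (that unit is $\Fcal\to f_*f^*\Fcal$ on $X$); it is the unit of the adjunction $f_!\dashv f^*$ you are in the process of establishing, and it has to be constructed, e.g.\ again by splitting and descent, or by using that the diagonal $X'\to X'\times_X X'$ is an open and closed immersion since $f$ is finite étale. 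This is easily repaired but should not be cited as a formal consequence of $f^*\dashv f_*$.

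The genuine gap is in your justification of the cohomological statement. For the fppf topology you cannot argue via stalks at geometric points: the formula $(\R^i f_*\Gcal)_{\bar x}\cong \H^i_\fppf(X'_{\bar x},\Gcal)$ is an étale-site statement, and the vanishing you assert is false for general fppf sheaves — for a separably closed, non-perfect field $k$ of characteristic $p$ (a strictly Henselian local ring) one has $\H^1_\fppf(\Spec k,\mu_p)=k^\times/(k^\times)^p\neq 0$ and $\H^1_\fppf(\Spec k,\alpha_p)=k/\{a^p: a\in k\}\neq 0$. So the step ``which vanishes for $i>0$ since $X'_{\bar x}$ is a finite disjoint union of strict Henselizations'' does not stand as written. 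The conclusion you need — $\R^i f_*=0$ for $i>0$ on the fppf site when $f$ is finite \emph{étale} — is nevertheless true, and it follows from your own splitting technique rather than from a stalk computation: exactness of $f_*$ can be checked after restricting to an étale cover of $X$ trivializing $f$, where $f_*$ becomes a finite product of sheaves and is visibly exact. (Note this exactness really does use étaleness; it fails for general finite, e.g.\ purely inseparable or closed-immersion, morphisms in the fppf topology.) Alternatively one can bypass the Leray argument as the paper (following Milne) does, by noting that the composite of $f^*$ with the canonical identification $\H^r_\fppf(X',f^*\Fcal)\cong\H^r_\fppf(X,f_*f^*\Fcal)$ is induced by the unit $\Fcal\to f_*f^*\Fcal$, so the total composite is induced by $\Tr_f\circ(\mathrm{unit})=d$; but that identification again rests on the fppf-exactness of $f_*$ for finite étale $f$, which should be recorded with the local-splitting proof just indicated.
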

\begin{proof}
	On may copy the proof of~\cite[p.~168, Lemma~V.1.12]{MilneEtaleCohomology} almost verbatim: Let $\Fcal$ be a fppf sheaf on $X$. Let $X'' \to X$ be finite Galois with Galois group $G$ factoring as $X'' \to X' \to X$; $X'' \to X'$ is Galois with Galois group $H \leq G$. For any $U/X$ flat, we have $\Gamma(U,\Fcal) \hookrightarrow \Gamma(U',\Fcal) \hookrightarrow \Gamma(U'',\Fcal)$ and $\Gamma(U,\Fcal) \isoto \Gamma(U'',\Fcal)^G$, where $U' = U \times_X X'$ and $U'' = U \times_X X''$. For a section $s \in \Gamma(U,f_*f^*\Fcal) := \Gamma(U',\Fcal)$, we define
	\[
	\Tr_f(s) := \sum_{\sigma \in G/H}\sigma(s|_{U''});
	\]
	as this is fixed by $G$, it may be regarded as an element of $\Gamma(U,\Fcal) \isoto \Gamma(U'',\Fcal)^G$. Clearly, $\Tr_f$ defines a morphism $f_*f^*\Fcal \to \Fcal$ such that its composite with $\Fcal \to f_*f^*\Fcal$ is multiplication by the degree $d$ of $f$.
	
	If $X'$ is a disjoint union of $d$ copies of $X$, obviously \[\Hom_{X'}(\Fcal',f^*\Fcal) \isoto \Hom_X(f_*\Fcal',\Fcal),\] and one may reduce the question to this split case by passing to a finite \'{e}tale covering of $X$, for example to $X'' \to X$, and using the fact that $\Hom$ is a sheaf.
	
	In
	\[
	\H^r_\fppf(X,\Fcal) \xrightarrow{f^*} \H^r_\fppf(X',f^*\Fcal) \xrightarrow{\operatorname{can}} \H^r_\fppf(X,f_*f^*\Fcal) \xrightarrow{\Tr_f} \H^r_\fppf(X,\Fcal)
	\]
	the composite of the first two maps is induced by $\Fcal \to f_*f^*\Fcal$, and the composite of all three is induced by $(\Fcal \to f_*f^*\Fcal \xrightarrow{\Tr_f} \Fcal)$, which is multiplication by $d$.
\end{proof}

\begin{theorem} \label[theorem]{thm:isotrivialppart}
	Let $p$ be a prime and $X$ be a scheme of characteristic $p$. Let $f: X' \to X$ be a proper, surjective, generically étale morphism of generical degree prime to $p$ of regular, integral, separated varieties over a finite field.  Let $\Acal$ be an abelian scheme on $X$ and $\Acal' := f^*\Acal = \Acal \times_X X'$.  If $\Sha(\Acal'/X')[p^\infty]$ is finite, so is $\Sha(\Acal/X)[p^\infty]$.
\end{theorem}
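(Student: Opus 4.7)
The plan is to prove that the pullback
\[f^*\colon \Sha(\Acal/X)[p^\infty]\to\Sha(\Acal'/X')[p^\infty]\]
is injective; since the target is finite by hypothesis, this will give finiteness of the source. Injectivity will come from combining two ingredients: the trace map of \cref{lemma:SpurabbildungInFlacherKohomologie} applied on the finite étale locus of $f$, where the assumption $(d,p)=1$ makes pullback injective on $p^\infty$-torsion, and the embedding of $\Sha(\Acal/X)$ into the Galois cohomology of the generic point provided by \cref{thm:codim1Punktereichen}.

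Since $f\colon X'\to X$ is proper and generically étale of generic degree $d$, I would first choose a dense open $U\subseteq X$ over which $f$ is finite étale of constant degree $d$ (such a $U$ exists because a proper generically finite morphism is finite over a dense open of the target, and the étale locus of a finite morphism is open), and set $U':=f^{-1}(U)\subseteq X'$. Identifying étale and fppf cohomology for the smooth group scheme $\Acal$ via \cref{lemma:VergleichHifppfHiSYNHietSmoothGroupScheme}, \cref{lemma:SpurabbildungInFlacherKohomologie} applied to $f|_{U'}\colon U'\to U$ yields that the composition
\[
\H^1_\et(U,\Acal|_U)\xrightarrow{f^*}\H^1_\et(U',\Acal'|_{U'})\xrightarrow{\Tr_f}\H^1_\et(U,\Acal|_U)
\]
is multiplication by $d$. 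Since $(d,p)=1$, multiplication by $d$ is an automorphism of the $p^\infty$-torsion, so $f^*$ is injective on $p^\infty$-torsion.

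To descend this to $X$, I would invoke \cref{thm:codim1Punktereichen}---whose hypothesis on $\Acal$ is furnished by \cref{vanishing-hypothesis}---which presents $\Sha(\Acal/X)=\H^1_\et(X,\Acal)$ as the kernel of a map out of $\H^1_\et(K(X),j^*\Acal)$, thereby providing an injection $\H^1_\et(X,\Acal)\hookrightarrow\H^1_\et(K(X),j^*\Acal)$. This injection factors through $\H^1_\et(U,\Acal|_U)$, so the restriction $\Sha(\Acal/X)\to\H^1_\et(U,\Acal|_U)$ is injective as well. Assembling everything in the commutative square
\[
\begin{tikzcd}
\Sha(\Acal/X)[p^\infty]\ar[r,"f^*"]\ar[d,hookrightarrow] & \Sha(\Acal'/X')[p^\infty]\ar[d] \\
\H^1_\et(U,\Acal|_U)[p^\infty]\ar[r,hookrightarrow] & \H^1_\et(U',\Acal'|_{U'})[p^\infty]
\end{tikzcd}
\]
then forces the top horizontal arrow to be injective, completing the proof.

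The delicate point is the injectivity of the restriction $\Sha(\Acal/X)\to\H^1_\et(U,\Acal|_U)$: \emph{a priori} the kernel is controlled only by local cohomology along $X\setminus U$, and since $X\setminus U$ has codimension only $\geq 1$, \cref{vanishing-hypothesis} does not rule out possible $p$-primary contributions in $\H^i_{X\setminus U}(X,\Acal)$. The essential trick is to sidestep any local-cohomology analysis altogether by routing the restriction through the generic point via \cref{thm:codim1Punktereichen}, which supplies injectivity even on $p$-torsion and regardless of the codimension of the bad locus.
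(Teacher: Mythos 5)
Your proof is correct, and it is a leaner packaging of the same central mechanism as the paper's argument: kill the kernel of $f^*$ by a trace over the locus where $f$ is finite \'etale of degree prime to $p$, and get from $X$ down to that locus by an injectivity statement for restriction towards the generic point. The differences are worth recording. The paper's proof (following the structure of the $\ell\neq p$ case) runs in three steps: finiteness of $\H^1_\fppf(X,f_*\Acal')[p^\infty]$ via the Leray spectral sequence for $f$, a trace argument on $X$ itself which needs $\Sha(\Acal/X)[p^\infty]$ to be cofinitely generated (\cref{lemma:ShapinftyCofinitelyGenerated}, hence ultimately \cref{thm:H1fppfofFiniteFlatGroupSchemeOverFiniteFieldIsFinite}), and finally a reduction to the generic fiber $f_\eta\colon X'_\eta\to\{\eta\}$, where the trace of \cref{lemma:SpurabbildungInFlacherKohomologie} exists and where injectivity of $\H^1_\et(X,\Acal)\to\H^1_\et(\{\eta\},\Acal_\eta)$ is proved directly from the N\'eron mapping property $\Acal\isoto g_*g^*\Acal$ plus the Leray spectral sequence for $g$. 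You instead prove outright that $f^*$ is injective on $p^\infty$-torsion, working over a dense open $U$ on which $f$ is finite \'etale of constant degree $d$ rather than over the generic point (an interchangeable choice), and you obtain the injectivity of $\Sha(\Acal/X)\to\H^1_\et(U,\Acal|_U)$ by quoting \cref{thm:codim1Punktereichen} (legitimately, since only its unconditional generic-point injectivity is used, not the prime-to-$p$ statement (b)); the paper derives the same injectivity from the N\'eron property directly, which is the lighter-weight ingredient hiding inside \cref{thm:codim1Punktereichen}. What your route buys is that neither the cofinite generation of $\Sha(\Acal/X)[p^\infty]$ nor the Leray sequence for $f$ is needed for this particular statement, since finiteness is inherited along an injection; what it costs is reliance on the comparatively heavy \cref{thm:codim1Punktereichen}, and your justification of the existence of $U$ (properness plus finite generic fiber gives finiteness over a dense open, then shrink to the \'etale locus) is the one point you should spell out, though it is standard and correct as sketched.
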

\begin{proof}
	The same proof as in~\cite[Theorem~4.29]{KellerSha} works, one only needs $\Sha(\Acal/X)[p^\infty]$ to be cofinitely generated in Step~2, which is~\cref{lemma:ShapinftyCofinitelyGenerated}.  The trace morphism in Step~3 for fppf cohomology comes from~\cref{lemma:SpurabbildungInFlacherKohomologie}.  Note that the proof given there does not need the regularity of $X, X'$ and that varieties over a field are excellent by~\cite[Corollary~2.40\,(a)]{Liu2006}. For the convenience of the reader, we reproduce the proof of~\cite[Theorem~4.29]{KellerSha} adapted to our situation here:
	
	\textbf{Step~1:  {\boldmath$\H^1_\fppf(X, f_*\Acal')[p^\infty]$} is finite.}  This follows from the low terms exact sequence
	\[
	0 \to \H^1_\fppf(X, f_*\Acal') \to \H^1_\fppf(X', \Acal')
	\]
	associated to the Leray spectral sequence \[\H^p_\fppf(X, \R^qf_*\Acal') \Rightarrow \H^{p+q}_\fppf(X', \Acal')\] and the finiteness of
	\[
	\H^1_\fppf(X', \Acal')[p^\infty] = \Sha(\Acal'/X')[p^\infty].
	\]
	
	\textbf{Step~2:  The theorem holds if there is a trace morphism.}  Since by \cref{lemma:SpurabbildungInFlacherKohomologie} there is a trace morphism $f_*f^*\Acal \to \Acal$ such that the composition with the adjunction morphism
	\[
	\Acal \to f_*f^*\Acal \to \Acal
	\]
	is multiplication by $\deg{f} \neq 0$, the finiteness of $\H^1_\fppf(X,\Acal)[p^\infty]$ follows from that of $\H^1_\fppf(X,f_*\Acal')[p^\infty]$ because both groups are cofinitely generated by~\cref{lemma:ShapinftyCofinitelyGenerated}.
	
	\textbf{Step~3: Proof of the theorem in the general case.} Let $\eta$ be the generic point of $X$.  Define $X'_\eta$ by the commutativity of the cartesian diagram
	\begin{equation}\begin{tikzcd}%\label{eq:kommutatives Quadrat Sha}
			X'_\eta \ar[r,hookrightarrow,"g'"] \ar[d,"f_\eta"] & X' \ar[d,"f"]\\
			\{\eta\}  \ar[r,hookrightarrow,"g"]           & X.
	\end{tikzcd}\end{equation}
	Since $f$ is generically \'{e}tale, we can apply~\cref{lemma:SpurabbildungInFlacherKohomologie} to $f_\eta$ in this commutative diagram. From the commutativity of that diagram, the kernel of $f^*: \H^1_\fppf(X,\Acal) \to \H^1_\fppf(X',\Acal')$ is contained in the kernel of the composition
	\[
	\H^1_\fppf(X,\Acal) \stackrel{g^*}{\to} \H^1_\fppf(\{\eta\},\Acal_\eta) \stackrel{f_\eta^*}{\to} \H^1_\fppf(X'_\eta,\Acal'_{X'_\eta}),
	\]
	so it suffices to show that the first arrow $g^*$ is injective. But by the N\'{e}ron mapping property $\Acal \isoto g_*g^*\Acal$~\cite[Theorem~3.3]{KellerSha} (for the \emph{\'{e}tale} topology!), $\H^1_\et(X,\Acal) \isoto \H^1_\et(X,g_*\Acal_\eta)$. However, the Leray spectral sequence \[\H^p_\et(X,\R^qg_*\Acal_\eta) \Rightarrow \H^{p+q}_\et(\{\eta\},\Acal_\eta)\] gives an injection
	\[
	0 \to \H^1_\et(X,g_*\Acal_\eta) \to \H^1_\et(\{\eta\},\Acal_\eta).
	\]
	But because $\Acal/X$ and $\Acal_\eta/\{\eta\}$ are smooth commutative group schemes, their \'{e}tale cohomology agrees with their flat cohomology, see~\cref{lemma:VergleichHifppfHiSYNHietSmoothGroupScheme}, and the comparison of topology morphisms are functorial.
\end{proof}

\begin{theorem}[Stein factorization, alteration $=$ finite $\circ$ modification]\label[theorem]{Stein factorization}
	Let $f: X' \to X$ be a proper morphism of Noetherian schemes. Then one can factor $f$ into $g \circ f'$, where $f': X' \to Y := \mathbf{Spec}_Xf_*\Ocal_{X'}$ is a proper morphism with connected fibers, and $g: Y \to X$ is a finite morphism. If $f$ is an alteration, $f'$ is birational and proper (a \emph{modification}).
\end{theorem}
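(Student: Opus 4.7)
My plan is to define $Y := \mathbf{Spec}_X f_*\Ocal_{X'}$ and to produce $f'$ via the universal property of the relative $\mathbf{Spec}$. First I would apply the coherence theorem~\cite[Thm.~(3.2.1)]{EGAIII1} to the proper morphism $f$: since $\Ocal_{X'}$ is coherent, $f_*\Ocal_{X'}$ is a coherent $\Ocal_X$-algebra, so $g: Y \to X$ is finite. Applying the adjunction $\Hom_X(X',\mathbf{Spec}_X \Acal) = \Hom_{\Ocal_X\text{-alg}}(\Acal, f_*\Ocal_{X'})$ to the identity of $f_*\Ocal_{X'}$ produces a canonical $X$-morphism $f': X' \to Y$ with $g \circ f' = f$; then $f'$ is proper by the cancellation property, since $g \circ f' = f$ is proper and $g$ is separated (being finite).

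The essential point is the connectedness of the fibers of $f'$. From $g_*f'_*\Ocal_{X'} = f_*\Ocal_{X'} = g_*\Ocal_Y$, together with the fact that $g$ is affine (so $g_*$ is faithful on quasi-coherent sheaves), one deduces $f'_*\Ocal_{X'} = \Ocal_Y$. This places us in the classical setting of Zariski's connectedness theorem, which I would derive from the theorem on formal functions~\cite[Thm.~(4.1.5)]{EGAIII1}: for each $y \in Y$ it gives an isomorphism $\widehat{\Ocal}_{Y,y} \isoto \varprojlim_n \Gamma(f'^{-1}(y)_n, \Ocal)$, where $f'^{-1}(y)_n$ is the $n$-th infinitesimal neighborhood of the fiber. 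Any disconnection of $f'^{-1}(y)$ would lift (by vanishing of obstructions to deforming idempotents along nilpotent extensions) to compatible nontrivial idempotents in each term of the limit, producing a nontrivial idempotent in the local ring $\widehat{\Ocal}_{Y,y}$, a contradiction. This step, which rests on the theorem on formal functions, is the main obstacle.

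Finally, if $f$ is an alteration, so that $X'$ and $X$ are integral and $f$ is dominant and generically finite, I can compute the generic fiber affine-locally: for $\Spec A \subseteq X'$ mapping into $\Spec R \subseteq X$ with fraction fields $K = K(X)$ and $L = K(X')$, the ring $A \otimes_R K$ is a finite $K$-algebra contained in the total fraction ring of $A$, hence an integral finite $K$-algebra containing $A$, therefore a field equal to $L$. So $X'_\eta = \Spec K(X')$ and $(f_*\Ocal_{X'})_\eta = K(X')$. Thus $g$ has a unique point $y$ over $\eta$ with $\kappa(y) = K(X')$, and $f'$ maps the generic point $\xi'$ of $X'$ to $y$, inducing the identity on function fields. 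Being proper and birational, $f'$ is a modification, as claimed.
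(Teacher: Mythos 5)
Your proposal is correct, but it takes a more self-contained route than the paper, which simply cites \cite[Thm.~4.3.1]{EGAIII1} for the entire first part (existence of the factorization through $Y=\mathbf{Spec}_Xf_*\Ocal_{X'}$, finiteness of $g$, connectedness of the fibres of $f'$). You reconstruct this from scratch: coherence of $f_*\Ocal_{X'}$ by \cite[Thm.~(3.2.1)]{EGAIII1} gives finiteness of $g$, cancellation gives properness of $f'$, the identification $f'_*\Ocal_{X'}=\Ocal_Y$ follows since $g_*$ identifies quasi-coherent sheaves on $Y$ with quasi-coherent $g_*\Ocal_Y$-modules on $X$ (what you need here is that $g_*$ reflects isomorphisms, i.e.\ is conservative, not merely faithful — true for affine $g$), and connectedness of fibres comes from the theorem on formal functions plus the absence of nontrivial idempotents in the local ring $\widehat{\Ocal}_{Y,y}$; all of this is standard and sound. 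For the alteration statement the two arguments also differ in mechanism: the paper chooses an open $U\subseteq X$ over which $f$ is finite and observes that over $g^{-1}(U)$ the morphism $f'$ is affine with $f'_*\Ocal_{X'}=\Ocal_Y$, hence an isomorphism there, so $f'$ is an isomorphism over a dense open; you instead compute the generic fibre, $X'_\eta=\Spec K(X')$ and $(f_*\Ocal_{X'})_\eta=K(X')$, and conclude that $f'$ induces an isomorphism on function fields, hence is birational. Your version uses only generic finiteness in the function-field sense (note the small reordering needed: the finiteness of $A\otimes_R K$ over $K$ should be deduced from $[K(X'):K(X)]<\infty$ together with finite generation and algebraicity of $A\otimes_R K\subseteq K(X')$ over $K$, rather than asserted at the outset), whereas the paper's version directly yields the slightly stronger fact that $f'$ is an isomorphism over a dense open of $Y$ — a conclusion your argument also gives, if wanted, by spreading out the isomorphism of generic fibres along the finitely presented proper morphism $f'$.
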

\begin{proof}
	See~\cite[Thm.~4.3.1]{EGAIII1} for the statement on the existence of the factorization, which includes $Y = \mathbf{Spec}_Xf_*\Ocal_{X'}$.
	
	Assume now that $f$ is an alteration. If $U \subseteq X$ is an open subscheme such that $f|_U$ is finite (in particular affine), one may shrink $U$ such that it is affine, so by finiteness of $g$, $g: g^{-1}(U) \to U$ is finite and can be written as $\Spec{B} \to \Spec{A}$. From the statement of Stein factorization, $g^{-1}(U) = \mathbf{Spec}_Uf_*\Ocal_{f^{-1}(U)}$, but $f'$ has geometrically connected fibers, so $f'_*\Ocal_{X'} = \Ocal_Y$, so $f'|_{g^{-1}(U)}$ is an isomorphism because it is affine.
\end{proof}

We also remove the hypotheses that $f$ is generically étale and has degree prime to $\ell$ if $\ell$ is invertible on the base scheme in~\cite[Theorem~4.29]{KellerSha}:

\begin{theorem} \label[theorem]{descent-of-finiteness-of-Sha-under-alterations}
	Let $f: X' \to X$ be a proper, surjective, generically finite morphism of regular, integral, separated varieties over a finite field.  Let $\Acal$ be an abelian scheme on $X$ and $\Acal' := f^*\Acal = \Acal \times_X X'$.  Let $\ell$ be invertible on $X$.  If $\Sha(\Acal'/X')[\ell^\infty]$ is finite, so is $\Sha(\Acal/X)[\ell^\infty]$.
\end{theorem}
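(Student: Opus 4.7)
The plan is to adapt the trace-argument proof of~\cite[Theorem~4.29]{KellerSha} (reproduced in~\cref{thm:isotrivialppart}) and replace the coprimality hypothesis $(d,\ell)=1$ by a divisibility observation that is available because $\ell$ is invertible on $X$. By~\cref{Stein factorization}, I would write $f=g\circ f'$ with $f':X'\to Y$ a proper birational modification and $g:Y\to X$ finite of generic degree $d$; since $X'$ is regular, $Y=\mathbf{Spec}_X f_*\Ocal_{X'}$ is the normalization of $X$ in $K(X')$, hence integral and normal. Using~\cref{thm:rationalmapextends} (Néron mapping property) for the proper birational $f'$ into the smooth separated group scheme $g^*\Acal$ over $Y$ (after replacing $Y$ by a regular alteration via~\cite{deJongAlterations} if $Y$ is not already regular), one obtains $f'_*f'^*g^*\Acal=g^*\Acal$ as étale sheaves on $Y$, so $f_*\Acal'=g_*g^*\Acal$ as étale sheaves on $X$.

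The low-term sequence of the Leray spectral sequence
\[
\H^p_\et(X,R^qf_*\Acal')\Rightarrow\H^{p+q}_\et(X',\Acal')
\]
then yields an injection $\H^1_\et(X,g_*g^*\Acal)\hookrightarrow\H^1_\et(X',\Acal')=\Sha(\Acal'/X')$, so $\H^1_\et(X,g_*g^*\Acal)[\ell^\infty]$ is finite by hypothesis. After the auxiliary alteration making $g:Y\to X$ finite flat of some new generic degree $d'$ (via de Jong plus miracle flatness, since normal plus regular target gives CM plus regular for a finite map between regular varieties), the Weil-restriction construction for abelian schemes provides a norm morphism $N:g_*g^*\Acal\to\Acal$ such that $N$ composed with the unit $\Acal\to g_*g^*\Acal$ is the multiplication $[d']$ on $\Acal$. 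Applying $\H^1_\et(X,-)$ then shows that the composition
\[
\Sha(\Acal/X)[\ell^\infty]\to\H^1_\et(X,g_*g^*\Acal)[\ell^\infty]\to\Sha(\Acal/X)[\ell^\infty]
\]
is multiplication by $d'$, so $[d']\cdot\Sha(\Acal/X)[\ell^\infty]$ is contained in the image of the finite middle group, hence is itself finite.

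By~\cite[Lemma~4.28]{KellerSha}, $\Sha(\Acal/X)[\ell^\infty]$ is cofinitely generated as a $\Z_\ell$-module, so its maximal divisible subgroup has the form $(\Q_\ell/\Z_\ell)^r$. Multiplication by any nonzero integer is surjective on $\Q_\ell/\Z_\ell$; therefore the finiteness of $[d']\cdot\Sha(\Acal/X)[\ell^\infty]$ forces $r=0$, whence $\Sha(\Acal/X)[\ell^\infty]$ is finite. The main obstacle I anticipate is the bookkeeping around the auxiliary alteration $\tilde Y\to Y$ needed to invoke~\cref{thm:rationalmapextends} and the Weil-restriction norm: one must simultaneously alter $X'$ compatibly, passing through a regular alteration of a suitable component of $X'\times_Y\tilde Y$ and using~\cref{thm:HiZXG=0} to ensure that $\Sha$-finiteness is inherited from $X'$ via the purity arguments for the intervening modifications, so that the Leray injection of the second paragraph remains valid for the altered $g$. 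Once this is handled, the essential new point---beyond the tools of~\cite{KellerSha}---is the surjectivity of $[d']$ on $\Q_\ell/\Z_\ell$, which makes the multiplication-by-$d'$ argument effective regardless of whether $\ell\mid d'$ and so removes the coprimality assumption of~\cref{thm:isotrivialppart}.
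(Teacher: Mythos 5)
Your overall skeleton (Stein factorization, a norm/trace argument, and the observation that on a cofinitely generated $\ell$-primary group a multiplication-by-$d'$ map with finite image forces the corank to vanish) is in the right spirit, and the corank observation is indeed the mechanism that makes coprimality irrelevant. But the execution has a genuine gap at the point where you need $g\colon Y\to X$ to be finite \emph{flat} so that the Weil-restriction norm $N\colon g_*g^*\Acal\to\Acal$ exists. Your route to flatness --- ``de Jong plus miracle flatness, since normal plus regular target gives CM plus regular'' --- does not work: $Y$ is only normal, and normal does not imply Cohen--Macaulay in dimension $\geq 3$, so miracle flatness is not applicable; and replacing $Y$ by a regular de Jong alteration destroys precisely the finiteness of $g$ that the norm construction requires (an alteration is not finite). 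Worse, every such auxiliary alteration of $Y$ forces you to replace $X'$ by an alteration $X''$ (a regular alteration of a component of $X'\times_Y\tilde Y$), and then the hypothesis of the theorem no longer applies: you would need finiteness of $\Sha$ over $X''$, but finiteness does \emph{not} ascend along alterations --- the paper only proves descent --- and \cref{thm:HiZXG=0} cannot rescue this, since purity compares $\H^1$ of a scheme with that of a dense open whose complement has codimension $\geq 2$, not with an alteration. So the Leray injection of your second paragraph into a group known to be finite is lost exactly when you modify $g$ to make the norm available; as written the argument does not close. (A smaller point: the alteration you introduce to justify $f'_*f'^*g^*\Acal=g^*\Acal$ is unnecessary --- \cref{thm:rationalmapextends} only needs a \emph{normal} Noetherian base, applied with $T=S=Y$, and $Y$ is normal as the Stein factor of the regular $X'$ --- but dropping it does not repair the flatness problem for the norm.)

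For comparison, the paper's proof avoids both obstacles by never asking $g$ to be flat or $Y$ to be regular: after the Stein factorization (\cref{Stein factorization}) the birational part is treated as a generically étale morphism of degree $1$, and the finite part is factored further into a finite purely inseparable morphism --- a universal homeomorphism, under which $f^*$ identifies the étale sites, so $\Sha$ is literally unchanged --- followed by a finite generically étale morphism, which is handled by the generic-point trace argument of \cref{thm:isotrivialppart} (Step 3 there only uses the trace over the generic point via \cref{lemma:SpurabbildungInFlacherKohomologie}, plus the Néron property and cofinite generation, which is where the role of the degree is absorbed). If you want to salvage your version, you must either restrict the norm step to the finite generically étale piece (reducing to the paper's argument) or supply an actual trace for finite locally free, non-étale morphisms together with a flatification that does not alter $X'$.
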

\begin{proof}
	By the Stein factorization~\cref{Stein factorization}, $f$ factors as a proper, surjective, birational morphism followed by a finite morphism. In particular, it is a generically étale alteration. The finite morphism factors as a finite purely inseparable morphism followed by a finite generically étale morphism. We prove the finiteness assertion of the theorem for all such morphisms separately:
	
	If $f$ is generically étale, this is~\cref{thm:isotrivialppart}. If $f$ is a proper, surjective, birational morphism, it is generically an isomorphism, i.e., generically étale of degree $1$.
	
	If $f$ is a universal homeomorphism, the \'{e}tale sites of $X$ and $X'$ are equivalent by $f^*$ and $f_*$ by~\cite[VIII.1.1]{SGA42}. In particular, the \'{e}tale cohomology groups $\Sha(\Acal/X) = \Het^1(X,\Acal)$ and $\Sha(\Acal'/X') = \Het^1(X',f^*\Acal)$ are isomorphic via $f^*$.
\end{proof}

\bigskip 

\bibliographystyle{amsalpha}
\bibliography{ShaP}

\end{document}